\newtheorem{definition}{Definition}
\newtheorem{Theorem}{Theorem}
\newtheorem{Lemma}{Lemma}
\newtheorem{Remark}{Remark}
\newcommand{\m}[1]{\mathbf{#1}}
\newcommand{\mc}[1]{\mathcal{#1}}
\newcommand{\mb}[1]{\mathbb{#1}}
\newcommand\numberthis{\addtocounter{equation}{1}\tag{\theequation}}
\newcommand*{\eprf}{\hfill\ensuremath{\blacksquare}}
	\tikzstyle{frame} = [draw, -latex]
	\tikzstyle{line} = [draw]
	\tikzstyle{line2} = [draw, dashdotted]
	\tikzstyle{line3} = [draw, dashed]
	\tikzstyle{line3UD} = [draw, dashed]
	\tikzstyle{place} = [circle, draw=black, fill=white, thick, inner sep=2pt, minimum size=1mm]
	\tikzstyle{place2} = [circle, draw=black, fill=black, thick, inner sep=2pt, minimum size=1mm]
	\tikzstyle{placeRed} = [circle, draw=red, fill=red, thick, inner sep=2pt, minimum size=1mm]
	\tikzstyle{vertex} = [circle, draw=black, fill=black, thick, inner sep=2pt, minimum size=1mm]
\tikzset{
    right angle quadrant/.code={
        \pgfmathsetmacro\quadranta{{1,1,-1,-1}[#1-1]}     
        \pgfmathsetmacro\quadrantb{{1,-1,-1,1}[#1-1]}},
    right angle quadrant=1, 
    right angle length/.code={\def\rightanglelength{#1}},   
    right angle length=1.4ex, 
    right angle symbol/.style n args={3}{
        insert path={
            let \p0 = ($(#1)!(#3)!(#2)$) in     
                let \p1 = ($(\p0)!\quadranta*\rightanglelength!(#3)$), 
                \p2 = ($(\p0)!\quadrantb*\rightanglelength!(#2)$) in 
                let \p3 = ($(\p1)+(\p2)-(\p0)$) in  
            (\p1) -- (\p3) -- (\p2)
        }
    }
}
\title{\LARGE \bf
Distributed Computation of Graph Matching in Multi-Agent Networks
}
\author{   Quoc Van Tran, Zhiyong Sun, Brian D. O. Anderson, and Hyo-Sung Ahn
\thanks{Q. V. Tran and H.-S. Ahn are with the School of Mechanical Engineering,
        Gwangju Institute of Science and Technology, Gwangju, Republic of Korea.
        E-mails: {\tt\small $\{$tranvanquoc, hyosung$\}$@gist.ac.kr}}
\thanks{Z. Sun is with Department of Electrical Engineering, Eindhoven University of Technology (TU/e), the Netherlands. E-mails: {\tt\small sun.zhiyong.cn@gmail.com, z.sun@tue.nl}}
\thanks{B. D. O. Anderson is with the Research School of Electrical, Energy
and Materials Engineering, Australian National University, Acton, A.C.T.
2601, Australia, the School of Automation, Hangzhou Dianzi University,
Hangzhou 310018, China, and the Data61-CSIRO, Canberra, A.C.T. 2601,
Australia (\tt\small brian.anderson@anu.edu.au).}
}
\begin{document}

\maketitle
\thispagestyle{empty}
\pagestyle{empty}

\begin{abstract}
This work considers the distributed computation of the one-to-one vertex correspondences between two undirected and connected graphs, which is called \textit{graph matching}, over multi-agent networks. Given two \textit{isomorphic} and \textit{asymmetric} graphs, there is a unique permutation matrix that maps the vertices in one graph to the vertices in the other. 
Based on a convex relaxation of graph matching in Aflalo et al. \cite{Aflalo2015pnas}, we propose a distributed computation of graph matching as a distributed convex optimization problem subject to equality constraints and a global set constraint, using a network of multiple agents whose interaction graph is connected. Each agent in the network only knows one column of each of the adjacency matrices of the two graphs, and all agents collaboratively learn the graph matching by exchanging information with their neighbors. The proposed algorithm employs a projected primal-dual gradient method to handle equality constraints and a set constraint. Under the proposed algorithm, the agents' estimates of the permutation matrix converge to the optimal permutation globally and exponentially fast. Finally, simulation results are given to illustrate the effectiveness of the method.
\end{abstract}

\section{Introduction} 
A graph consists of a set of vertices and a set of edges. The vertices might represent abstract entities such as features in an image, point patterns or users in social networks, or represent physical agents such as body parts, mobile robots or unmanned aerial vehicles, and the edges in the graph represent the relations between the vertices. Given two connected graphs, the problem of finding an optimal permutation matrix that minimizes the disagreement between two corresponding adjacency matrices $\m{A}$ and $\m{B}\in \mb{R}^{n\times n}$ is referred to as \textit{graph matching}. Graph matching has a wide range of applications in different science and engineering disciplines such as computer vision and pattern recognition \cite{Torresani2008eccv,Foggia2014,PDas2012}, neuroscience \cite{Vogelstein2015PloSONE}, and formation control \cite{ZKan2015tcns, Sakurama2019tac}, to name a few.

Graph matching has been studied extensively in the last few decades. 
Though there are numerous graph matching algorithms developed in the literature, graph matching remains computationally intractable \cite{Lyzinski2016pami}. 
Graph matching with zero adjacency disagreement is said to be \textit{exact matching}. In the presence of noise, it is referred to as \textit{inexact matching} with the minimal adjacency disagreement. 
Heuristic algorithms, e.g., ones based on some forms of tree search \cite{Cook2003, Ullmann2011}, have no theoretical guarantee of the convergence to the global minimizer of the graph matching. Spectral methods rely on the fact that two adjacency matrices of two isomorphic graphs share the same spectrum \cite{Caelli2004pami, Duchenne2011pami}.
A popular approach for addressing graph matching is the continuous optimization based on relaxations of the discrete graph matching problem \cite{Aflalo2015pnas, Lyzinski2016pami}. Two common relaxations of graph matching are the \textit{indefinite relaxation} by minimizing $-\langle \m{AP},\m{PB}\rangle$ \cite{Vogelstein2015PloSONE, Lyzinski2016pami}, where $\langle \cdot,\cdot\rangle$ denotes the inner product, and the \textit{convex relaxation} by minimizing $||\m{AP}-\m{PB}||_F^2$ \cite{Aflalo2015pnas, Lyzinski2016pami}, where $||\cdot||_F$ denotes the Frobenius norm, over $\m{P}$ in the set of doubly stochastic matrices. The former in general has multiple local minima and hence depending on the initialization, a gradient-based algorithm will converge to a local minimum of the objective function, but not necessarily the global minimum. The convex relaxation has a unique least-squares solution in the doubly stochastic matrix set. However, the actual permutation that matches two isomorphic graphs can be only recovered when the norm of the perturbed adjacency matrix is less than a small value \cite{Lyzinski2016pami}. By using friendliness properties of adjacency matrices, which are characterized by their spectral properties, the graph matching can be further relaxed by replacing the set of doubly stochastic matrices by the set of \textit{pseudo-stochastic} matrices $\{\m{P}:\m{P}\m{1}_n=\m{1}_n\}$ \cite{Aflalo2015pnas}.
We refer the readers to \cite{Foggia2014,Lyzinski2016pami} for more comprehensive reviews of graph matching algorithms.
 
This paper considers the distributed computation of graph matching over an $n$-agent network in which each agent in the network only knows one column of each of the adjacency matrices $\m{A}$ and $\m{B}$. The agents in the network collaboratively learn the graph matching by exchanging information with their neighbors. The distributed setup is commonly used in distributed algorithms to solve linear algebraic equations \cite{Mou2015tac,Anderson2016naco, PWang2019survey} and linear matrix equations \cite{Deng2019, XZeng2019tac}. Distributed algorithms and distributed optimizations over networked systems in particular have attracted lot of research interest in different areas of science and engineering \cite{He2018NIPS, KZhang2018icml, TYang2019, KRyu2019cdc, Alghunaim2019NIPS}, partly due to the increasing scale of the underlying problems, the distributed nature of networked systems, and the privacy of individual information. Motivated by these facts, we present a distributed algorithm to compute the graph matching between two isomorphic graphs based on a convex relaxation of graph matching, and by using distributed optimization over multi-agent networks.

The contributions of this paper are as follows. First, based on the centralized algorithm in \cite{Aflalo2015pnas}, we formulate the graph matching as a multi-agent convex optimization problem subject to equality constraints and a global set constraint, in which each agent in the network only knows one column of each of the adjacency matrices of the two graphs to be matched. Then as a development of \cite{Aflalo2015pnas}, we prove that almost all adjacency matrices have friendliness properties, which allows us to focus on graph matching of asymmetric graphs without loss of generality.
As the second contribution, we describe the geometric interpretation of the constrained set and derive an orthogonal projection operator associated with it. We then propose a distributed optimization algorithm to compute the permutation matrix that matches two isomorphic and asymmetric graphs, over the multi-agent network. Further, we establish the globally exponential convergence of the agents' estimates of the permutation matrix to the actual one, assuming that the interaction graph of the agents is connected. Finally, we illustrate the theoretical analysis through simulations.

The rest of the paper is organized as follows. Preliminaries and the graph matching problem are presented in Section \ref{sec:preliminary}. In Section \ref{sec:distributed_algorithm}, we investigate a projected optimization algorithm over a multi-agent system to compute the graph matching. An example of matching two isomorphic and asymmetric graphs is presented in Section \ref{sec:sim}. Finally, Section \ref{sec:conclusion} concludes this paper.

\section{Preliminaries and Problem Formulation}\label{sec:preliminary} 
\subsubsection*{Notation}
Let $\mc{G}_1=(\mc{V},\mc{E}_1,\m{A})$ and $\mc{G}_2=(\mc{V},\mc{E}_2,\m{B})$ be two undirected graphs of $n$ vertices whose index set is $\mc{V}=\{1,\ldots,n\}$ and edge sets are $\mc{E}_1,\mc{E}_2\subseteq \mc{V}\times \mc{V}$, respectively. In addition, $\m{A}$ and $\m{B}\in [0,\infty)^{n\times n}$ denote the symmetric adjacency matrices\footnote{In this work, graphs are considered to be weighted without further explicit mention.} of the undirected graphs $\mc{G}_1$ and $\mc{G}_2$, respectively,
whose entries are non-negative scalar weights characterizing the interactions between the vertices. 
Let $\m{a}_i$ and $\m{b}_i\in \mb{R}^{n}$ be the $i$th column vectors of $\m{A}$ and $\m{B}$, respectively.
The space of vertex permutations is denoted by $\mc{P}=\{\pi:\mc{V}\rightarrow\mc{V}\}$, which is characterized by the set of permutation matrices $\{\m{\Pi}\in \{0,1\}^{n\times n}:\m{\Pi}\m{1}_n=\m{\Pi}^\top\m{1}_n=\m{1}_n\}$, where $\m{1}_n$ is the vector of all ones. Let $\text{col}(\m{x}_1,\ldots,\m{x}_n)=[\m{x}_1^\top,\ldots,\m{x}_n^\top]^\top$ be the stack vector of $\m{x}_1,\ldots,\m{x}_n\in\mb{R}^n$.
Denote the inner product of two vectors or two matrices of the same size as $\langle\Phi,\m{X} \rangle=\sum_{i,j}(\Phi)_{ij}(\m{X})_{ij}$, where $(\cdot)_{ij}$ is the $(i,j)$-th entry. Let $(\m{X})_{i}^\text{C}$ and $(\m{X})_i^\text{R}$ be the $i$th column vector and the $i$th row vector of a matrix $\m{X}$, respectively. The notation $||\cdot||$ denotes the Euclidean norm. Let $\m{R}_+^n$ be the nonnegative orthant of $\mb{R}^n$. Denote by $\mc{B}_\epsilon(\m{x}),\m{x}\in \mb{R}^n$ as the open ball \textit{centered} at a point $\m{x}$ with \textit{radius} $\epsilon>0$.
\subsection{Convex Analysis}
A set $\Omega\subseteq \mb{R}^n$ is \textit{convex} if for any $\m{x}$ and $\m{y}\in\Omega$ and $\alpha\in [0,1]$, it holds $\alpha \m{x}+(1-\alpha)\m{y}\in \Omega$. A function $f:\Omega\rightarrow \mb{R}$ is \textit{convex} if 
$
f(\alpha\m{x}+(1-\alpha)\m{y})\leq \alpha f(\m{x})+(1-\alpha)f(\m{y}),
$
for any $\m{x}$ and $\m{y}\in\Omega$ and $\alpha\in [0,1]$.
\subsection{Convergence Analysis}
Consider the autonomous system
\begin{equation}\label{eq:auto_system}
\dot{\m{x}}=f(\m{x}),~\m{x}(0)=\m{x}_0,
\end{equation}
where $f:\Omega\rightarrow\mb{R}^n$ is a Lipschitz continuous map from a set $\Omega\subset\mb{R}^n$ into $\mb{R}^n$. Let $\bar{\m{x}}\in \Omega$ be an equilibrium point of \eqref{eq:auto_system}. Consider a solution trajectory $\m{x}(t)$ of \eqref{eq:auto_system}. A point $\m{p}$ is said to be a \textit{positive limit point} of $\m{x}(t)$ if there exists a sequence $\{t_m\}$, with $t_m\rightarrow \infty$ as $m\rightarrow\infty$, such that $\m{x}(t_m)\rightarrow \m{p}$ as $m\rightarrow \infty$. A set $\mc{M}$ is said to be a \textit{positively invariant set} if $\m{x}(0)\in \mc{M}\Rightarrow\m{x}(t)\in \mc{M},\forall t\geq 0$.
The equilibrium point $\bar{\m{x}}$ of \eqref{eq:auto_system} is \textit{stable} if, for each $\epsilon>0$, there is $\delta=\delta(\epsilon)>0$ such that $\m{x}(0)\in \mc{B}_\delta(\bar{\m{x}})\Rightarrow \m{x}(t)\in\mc{B}_\epsilon(\bar{\m{x}}),\forall t\geq 0$. 
\begin{Lemma}\cite[Theorem 4.4]{Khalil2002}\label{lm:LaSalle_principle}
Let $\Omega\in \mb{R}^n$ be a compact set that is positively invariant with respect to \eqref{eq:auto_system}. Let $V:\Omega\rightarrow\mb{R}$ be a continuous differentiable positive definite function such that $\dot{V}(\m{x})\leq 0$ in $\Omega$. Let $S=\{\m{x}\in \Omega:\dot{V}(\m{x})=0\}$. Let $M$ be the largest invariant set in $S$. Then, every solution starting in $\Omega$ approaches $M$ as $t\rightarrow \infty$.
\end{Lemma}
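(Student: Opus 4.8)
The plan is to prove the result through the theory of positive limit sets, which is the classical route to LaSalle's invariance principle. First I would observe that since $\Omega$ is compact and positively invariant, any solution $\m{x}(t)$ of \eqref{eq:auto_system} starting in $\Omega$ remains in $\Omega$ for all $t\geq 0$ and is therefore bounded with a solution defined for all forward time. Because $\dot{V}(\m{x})\leq 0$ on $\Omega$, the map $t\mapsto V(\m{x}(t))$ is monotonically non-increasing; being continuous on the compact set $\Omega$, $V$ is bounded below, so $V(\m{x}(t))$ converges to some finite limit $c$ as $t\to\infty$.

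Next I would introduce the positive limit set $L^+$ of the trajectory, i.e.\ the set of all positive limit points $\m{p}$ of $\m{x}(t)$ in the sense defined just before the lemma statement. The key structural step is to establish that, because the trajectory is confined to the compact set $\Omega$, the set $L^+$ is nonempty, compact, and invariant, and that the trajectory approaches it, meaning $\text{dist}(\m{x}(t),L^+)\to 0$ as $t\to\infty$. Granting these properties, I would then show $V$ is constant on $L^+$: for any $\m{p}\in L^+$ there is a sequence $t_m\to\infty$ with $\m{x}(t_m)\to\m{p}$, and continuity of $V$ gives $V(\m{p})=\lim_{m\to\infty}V(\m{x}(t_m))=c$. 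Hence $V\equiv c$ on $L^+$.

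From constancy of $V$ on the invariant set $L^+$ I would conclude that $\dot{V}\equiv 0$ there: any solution initialized in $L^+$ stays in $L^+$ by invariance, along which $V$ is identically $c$, so its time derivative vanishes. Therefore $L^+\subseteq S$, and since $L^+$ is itself invariant while $M$ is by definition the largest invariant set contained in $S$, we obtain $L^+\subseteq M$. Combining this with $\m{x}(t)\to L^+$ yields $\m{x}(t)\to M$ as $t\to\infty$, which is the claim.

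The hard part will be the structural lemma on positive limit sets, and within it the \emph{invariance} of $L^+$. Nonemptiness and compactness follow routinely from boundedness of the trajectory together with a Bolzano--Weierstrass and nested-intersection argument, but invariance genuinely requires the autonomous structure of \eqref{eq:auto_system} and the Lipschitz continuity of $f$, which guarantee existence, uniqueness, and continuous dependence of solutions on initial data. The argument I would give is to take $\m{p}\in L^+$ with $\m{x}(t_m)\to\m{p}$, compare the solution through $\m{p}$ with the time-shifted solutions through $\m{x}(t_m)$, and invoke continuous dependence to show that the entire orbit through $\m{p}$ lies in $L^+$. Once invariance is secured, the remaining steps are essentially immediate.
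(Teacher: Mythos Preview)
Your proposal is correct and follows the classical route to LaSalle's invariance principle via the structure of positive limit sets, which is precisely the argument in Khalil's textbook that the paper cites. Note, however, that the paper itself does not supply a proof of this lemma at all: it is stated as a quoted result from \cite[Theorem 4.4]{Khalil2002}, so there is no ``paper's own proof'' to compare against beyond the reference, and your argument matches that reference.
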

\begin{Lemma}\cite[Lemma 2.2]{XZeng2019tac}\label{lm:exponen__converg_linear_system}
Suppose that the system \eqref{eq:auto_system} is a linear time-invariant system, i.e. $f(\m{x})=\m{M}\m{x}+\m{b}$, where $\m{M}\in \mb{R}^{n\times n}$, $\m{b}\in \mb{R}^n$, and $\Omega=\mb{R}^n$. Then, if the system \eqref{eq:auto_system} converges to an equilibrium for any initial condition, its convergence is exponentially fast.
\end{Lemma}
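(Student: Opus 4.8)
The plan is to reduce the affine dynamics to a homogeneous linear system by centering at an equilibrium, and then to read off the exponential rate from the spectral structure of $\m{M}$. The hypothesis guarantees at least one equilibrium $\bar{\m{x}}$, i.e.\ a point with $\m{M}\bar{\m{x}}+\m{b}=\m{0}$. First I would introduce the error $\m{e}=\m{x}-\bar{\m{x}}$, which obeys the homogeneous system $\dot{\m{e}}=\m{M}\m{e}$ with solution $\m{e}(t)=e^{\m{M}t}\m{e}(0)$. Since $\m{e}(0)=\m{x}(0)-\bar{\m{x}}$ ranges over all of $\mb{R}^n$ as $\m{x}(0)$ does, convergence of $\m{x}(t)$ to an equilibrium for every initial condition is equivalent to convergence of the matrix exponential $e^{\m{M}t}$ as $t\to\infty$.

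Next, I would characterize which matrices $\m{M}$ admit such convergence by passing to the Jordan form $\m{M}=\m{S}\m{J}\m{S}^{-1}$ and analyzing $e^{\m{M}t}=\m{S}e^{\m{J}t}\m{S}^{-1}$ block by block. A Jordan block for eigenvalue $\lambda$ contributes the factor $e^{\lambda t}$ times a polynomial in $t$; convergence of $e^{\m{J}t}$ therefore forces every eigenvalue $\lambda$ to satisfy $\mathrm{Re}(\lambda)<0$, or else $\lambda=0$ with a trivial (size-one) block. Eigenvalues with positive real part produce divergence, nonzero purely imaginary eigenvalues produce undamped oscillation, and a nontrivial Jordan block at $0$ produces polynomial growth, so none of these converge.

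I would then exploit the resulting spectral splitting $\mb{R}^n=\ker(\m{M})\oplus\mc{V}_-$, where $\mc{V}_-$ collects the generalized eigenspaces of the eigenvalues with strictly negative real part, with associated commuting spectral projections $\m{P}_0$ and $\m{P}_-$. Because $e^{\m{M}t}$ acts as the identity on $\ker(\m{M})$ and commutes with the projections, I can write $\m{e}(t)-\m{P}_0\m{e}(0)=e^{\m{M}t}\m{P}_-\m{e}(0)$. On $\mc{V}_-$ every eigenvalue has negative real part, so there exist constants $c>0$ and $\alpha>0$ with $\|e^{\m{M}t}\m{P}_-\|\le c\,e^{-\alpha t}$; hence $\m{e}(t)\to\m{P}_0\m{e}(0)$ exponentially, and $\m{x}(t)\to\bar{\m{x}}+\m{P}_0\m{e}(0)$ at the same rate. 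The limit is again an equilibrium since $\m{P}_0\m{e}(0)\in\ker(\m{M})$, and $\alpha$ can be taken as the smallest magnitude among the real parts of the nonzero eigenvalues.

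The hard part will be the eigenvalue bookkeeping in the second step, specifically ruling out a nontrivial Jordan block at the zero eigenvalue. Unlike the standard Hurwitz situation, the equilibrium need not be unique, so $\m{M}$ is generally singular and one cannot simply invoke asymptotic stability. The delicate point is that convergence for \emph{every} initial condition is exactly what forces the zero eigenvalue to be semisimple: this is what separates genuine convergence to a point in $\ker(\m{M})$ from polynomially growing modes along the kernel direction, and it is the step that requires the full strength of the hypothesis.
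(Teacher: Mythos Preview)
Your argument is correct and is the standard route to this result: center at an equilibrium, pass to the Jordan form to see that convergence of $e^{\m{M}t}$ forces every eigenvalue to lie in the open left half-plane except possibly a semisimple eigenvalue at $0$, and then use the spectral splitting $\ker(\m{M})\oplus\mc{V}_-$ to extract an exponential bound on the stable component. Your identification of the semisimplicity of the zero eigenvalue as the crux is exactly right, and you correctly tie it to the hypothesis that convergence holds for \emph{every} initial condition.

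As for comparison: the paper does not supply its own proof of this lemma. It is quoted verbatim from an external reference (Lemma~2.2 of \cite{XZeng2019tac}) and used as a black box in the convergence-rate theorem. So there is no alternative argument in the paper to weigh against yours; your proposal simply fills in what the authors outsourced.
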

\subsection{Graph Matching}
A permutation $\pi$ maps a vertex $i$ in $\mc{G}_1$ to a vertex $\pi_i$ in $\mc{G}_2$, and associates each $(i,j)$-th entry of $\m{A}$ to an entry $(\m{B})_{\pi_i\pi_j}$ in $\mc{G}_2$. Let $\m{\Pi}$ be the permutation matrix associated with this permutation $\pi$. Then, $\m{A}=\m{\Pi}^\top\m{B}\m{\Pi}$ for exact matching. We denote by 
\begin{equation}\label{eq:distortion_function}
\text{dis}_{\mc{G}_1\rightarrow\mc{G}_2}(\m{\Pi})=||\m{A}-\m{\Pi}^\top\m{B}\m{\Pi}||_F
\end{equation}
the \textit{distortion} function specifying the adjacency disagreement between $\m{A}$ and $\m{B}$. Two graphs $\mc{G}_1$ and $\mc{G}_2$ are said to be \textit{isomorphic} if their adjacency disagreement is zero in the sense of Eq. \eqref{eq:distortion_function}, for some permutations $\m{\Pi}$.
Denote by $\text{Iso}(\mc{G}_1\rightarrow\mc{G}_2)=\{\m{\Pi}\in \mc{P}:\text{dis}_{\mc{G}_1\rightarrow\mc{G}_2}(\m{\Pi})=0\}$ the set of all permutations, or i.e., \textit{isomorphisms}, matching $\mc{G}_1$ and $\mc{G}_2$.

A graph $\mc{G}=(\mc{V},\m{A})$ is \textit{symmetric}\footnote{The symmetry or asymmetry of a graph should be distinguished from the symmetry of its associated adjacency matrix. The latter itself is \textit{symmetric} simply when the graph $\mc{G}$ is undirected and connected.} if there exists a nontrivial permutation $\m{\Pi}\in \mc{P}$ such that $\text{dis}_{\mc{G}\rightarrow\mc{G}}(\m{\Pi})=0$, or i.e., $\mc{G}$ has a nontrivial \textit{automorphism group} $\text{Iso}(\mc{G}\rightarrow\mc{G})$. The graph $\mc{G}$ is \textit{asymmetric} if it is not symmetric, or equivalently the only permutation matrix satisfying $\text{dis}_{\mc{G}\rightarrow\mc{G}}(\m{\Pi})=0$ is the trivial identity matrix.
If two asymmetric graphs $\mc{G}_1$ and $\mc{G}_2$ are isomorphic, they are related by a unique permutation, denoted by $\m{\Pi}^*$, which is the global solution of problem \eqref{eq:graph_matching} below. On the other hand, given two isomorphic and symmetric graphs $\mc{G}_1$ and $\mc{G}_2$, there are two or more permutation matrices which satisfy $\text{dis}_{\mc{G}_1\rightarrow\mc{G}_2}=0$.
\begin{Remark}
The symmetry/asymmetry property defined above for a graph, which is characterized by the uniqueness of its automorphisms, is also different from \textit{structural symmetry/structural asymmetry} of the associated graph topology.
In Fig. \ref{fig:example_graphs}, $\mc{G}_1$ and $\mc{G}_3$ are \textit{structurally symmetric} with respect to their vertical axes. That is, the permutations of the corresponding vertices on the two sides of the vertical dashed line leave the graphs unchanged. As a comparison, the only permutation under which the graphs $\mc{G}_2$ and $\mc{G}_4$ are invariant is the identity mapping. As a result, $\mc{G}_2$ and $\mc{G}_4$ in Fig. \ref{fig:example_graphs} are \textit{structurally asymmetric}. A structurally asymmetric graph is also asymmetric for all the edge weights in its associated adjacency matrices. A symmetric graph has a structurally symmetric graph topology and has some pairs of edges in which edges in each pair have the same weight. For instance, $\mc{G}_3$ in Fig. \ref{fig:example_graphs} is symmetric when the weights of every two corresponding edges on the two sides of the vertical dashed line are equal.
\end{Remark}
\begin{figure}[t]
\centering
\begin{tikzpicture}[scale=1.2]
\node[place] (4) at (0.6,0.) [] {};
\node[place] (3) at (0,0) [] {};
\node[place] (1) at (0,1.2) [] {};
\node[place] (2) at (0,0.6) [] {};
\node[place] (5) at (0.6,0.6) [] {};
\node[place] (6) at (0.6,1.2) [] {};
\node[] (g3) at (0.5,-0.2) [label=below:$\mathcal{G}_4$] {};

\draw (2) -- node [left] {} (3);
\draw (3)  -- node [left] {} (4);
\draw (4)  -- node [left] {} (5);
\draw (2)  -- node [left] {} (5);
\draw (2)  -- node [left] {} (1);
\draw (1) -- node [left] {} (5);
\draw (6) -- node [left] {} (5);

\node[place] (a4) at (-2.5-0.5,0) [] {};
\node[place] (a5) at (-4.5-0.5,0) [] {};
\node[place] (a3) at (-3-0.5,0) [] {};
\node[place] (a1) at (-4-0.5,0) [] {};
\node[place] (a2) at (-3.5-0.5,0) [] {};
\node[place] (a6) at (-3.75-0.5,.5) [] {};
\node[] (g1) at (-3.5-0.5,-0.2) [label=below:$\mathcal{G}_2$] {};
\draw (a1) -- (a5) (a1) -- (a2) (a2)--(a3) (a3)--(a4) (a6)--(a1) (a6)--(a2);

\node[place] (a4) at (-2.5-0.5,1.3) [] {};
\node[place] (a5) at (-4.5-0.5,1.3) [] {};
\node[place] (a3) at (-3-0.5,1.3) [] {};
\node[place] (a1) at (-4-0.5,1.3) [] {};
\node[place] (a2) at (-3.5-0.5,1.3) [] {};
\node[] (g1) at (-3.-0.5,1.1) [label=below:$\mathcal{G}_1$] {};
\draw[red, dashed] (-3.5-0.5,1.1) -- (-3.5-0.5,1.55);
\draw (a1) -- (a5) (a1) -- (a2) (a2)--(a3) (a3)--(a4);

\node[place] (b4) at (-1.6,0.) [] {};
\node[place] (b3) at (-1,0) [] {};
\node[place] (b1) at (-1,1.2) [] {};
\node[place] (b2) at (-1,0.6) [] {};
\node[place] (b5) at (-1.6,0.6) [] {};
\node[place] (b6) at (-1.6,1.2) [] {};
\node[] (g3) at (-1.4,-0.2) [label=below:$\mathcal{G}_3$] {};
\draw[red, dashed] (-1.3,1.4) -- (-1.3,-0.15);
\draw (b1) -- (b2) (b2)--(b3) (b3)--(b4) (b5)--(b6) (b5)--(b2) (b5)--(b4);
\end{tikzpicture}
\caption{Examples of structurally symmetric and structurally asymmetric graphs. The graphs $\mc{G}_1$ and $\mc{G}_3$ are structurally symmetric w.r.t. their vertical axes (dashed lines); $\mc{G}_2$ and $\mc{G}_4$ are structurally asymmetric.}
\label{fig:example_graphs}
\end{figure}
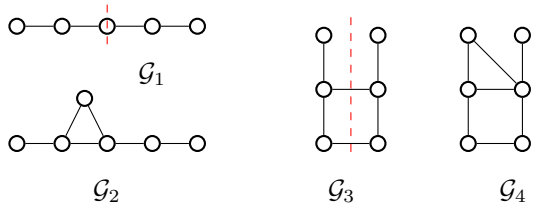

We now define a subclass of graphs which both covers almost all graphs with a given number of nodes, and is the subclass which we will prove to be as large as the set of asymmetric graphs.
\begin{definition}[Friendly Graphs]\cite{Aflalo2015pnas}\label{def:friendly_graphs}
A graph $\mc{G}$ is said to be \textit{friendly} if its adjacency matrix $\m{A}$ has simple spectrum (all eigenvalues are distinct) and eigenvectors satisfy $\m{u}_i^\top\m{1}_n\neq 0$, for all $i=1,\ldots,n$.
\end{definition}
The properties of adjacency matrices of friendly graphs are important in convexly relaxing the graph matching problem in the next subsection. A friendly graph is necessarily asymmetric \cite[Lemma 1]{Aflalo2015pnas}. In addition, it will be shown below that almost all adjacency matrices of asymmetric graphs are also friendly. That is, the properties of the adjacency matrices of unfriendly graphs are nongeneric. In fact, it is shown in \cite{Tao2017} that almost all adjacency matrices of random graphs have simple spectrum. 
In addition, in the sequel, we show that almost all adjacency matrices with nonnegative entries have eigenvectors that are not orthogonal to a given nonzero vector, e.g., $\m{1}_n$.

\begin{Theorem}\label{thm:zero_measure_set}
The set of weighted adjacency matrices which have nonnegative entries and an eigenvector orthogonal to $\m{1}_n$ is a set of measure zero.
\end{Theorem}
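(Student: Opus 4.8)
The plan is to reduce the statement to the non-vanishing of a single polynomial in the entries of the adjacency matrix. The key observation is a control-theoretic one: for a symmetric $\m{A}$, the vector $\m{1}_n$ is non-orthogonal to \emph{every} eigenvector precisely when the pair $(\m{A},\m{1}_n)$ is controllable, i.e. when the Krylov (controllability) matrix $\mc{K}(\m{A})=[\,\m{1}_n,\ \m{A}\m{1}_n,\ \ldots,\ \m{A}^{n-1}\m{1}_n\,]$ is nonsingular. First I would establish, via the Popov--Belevitch--Hautus test, that $\m{A}$ has an eigenvector orthogonal to $\m{1}_n$ if and only if $\det\mc{K}(\m{A})=0$: if $\m{A}$ has simple spectrum and a unit eigenvector $\m{u}$ with $\m{u}^\top\m{1}_n=0$, then expanding $\m{1}_n$ in the orthonormal eigenbasis shows it has zero component along $\m{u}$, hence lies in a proper $\m{A}$-invariant subspace and $\mc{K}(\m{A})$ drops rank; and if $\m{A}$ has a repeated eigenvalue, the corresponding eigenspace has dimension at least two, so it contains a vector orthogonal to $\m{1}_n$ while $\det\mc{K}(\m{A})=0$ holds automatically. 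Only the inclusion ``bad set $\subseteq\{\det\mc{K}(\m{A})=0\}$'' is actually needed. Thus the set in the theorem is contained in the zero set of the polynomial $g(\m{A}):=\det\mc{K}(\m{A})$, whose arguments are the independent (upper-triangular) entries of the symmetric nonnegative matrix $\m{A}$.

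Next I would invoke the standard fact that the zero set of a polynomial (equivalently, a nonconstant real-analytic function) on $\mb{R}^{m}$, with $m=\binom{n+1}{2}$, has Lebesgue measure zero unless the polynomial vanishes identically. The parameter space here is the nonnegative cone inside the space of symmetric matrices, which has nonempty interior; consequently $g$ vanishes identically on this cone if and only if it vanishes on all of $\mb{R}^{m}$. Everything therefore reduces to exhibiting a single symmetric matrix with nonnegative entries for which $g\neq 0$, i.e. a single \emph{friendly} graph.

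Producing this witness is the main obstacle, and it is exactly the point flagged for the zero-diagonal case: if self-loops were allowed, the diagonal matrix $\mathrm{diag}(1,2,\ldots,n)$ would already be friendly (eigenvectors $\m{e}_i$ with $\m{e}_i^\top\m{1}_n=1\neq0$), but genuine adjacency matrices have zero diagonal, and indeed no friendly graph exists for $n=2$. For $n\geq 3$ I would take the weighted path graph with distinct positive edge weights: its tridiagonal (Jacobi) structure forces a simple spectrum, and asymmetric weights break the reflection symmetry that would otherwise render the antisymmetric eigenvectors orthogonal to $\m{1}_n$. One checks that $g$, restricted to this family, is a nontrivial polynomial in the weights --- for instance the weights $(1,2)$ already give $\det\mc{K}(\m{A})=4\neq0$ when $n=3$ --- so a generic choice of weights yields $g(\m{A})\neq 0$. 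With the witness in hand, $g\not\equiv 0$, its zero set has measure zero, and since the set of adjacency matrices possessing an eigenvector orthogonal to $\m{1}_n$ is contained in that zero set (and is itself semialgebraic, hence measurable), it too has measure zero, which completes the argument.
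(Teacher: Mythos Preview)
Your proof is correct and takes a genuinely different route from the paper. The paper argues locally via first-order eigenvector perturbation theory: starting from a matrix $\m{A}$ (already assumed to have simple spectrum) with an eigenvector $\m{v}_i\perp\m{1}_n$, it computes the first-order change $\delta\m{v}_i=\sum_{j\neq i}\frac{\m{v}_j^\top\delta\m{A}\,\m{v}_i}{\lambda_i-\lambda_j}\m{v}_j$ and shows that the set of admissible perturbations $\delta\m{A}$ for which $\delta\m{v}_i=\m{0}$ is cut out by nontrivial linear equations in the entries of $\delta\m{A}$, hence has measure zero in every small ball. Your approach is global and algebraic: you trap the bad set inside the zero locus of the single polynomial $g(\m{A})=\det[\,\m{1}_n,\m{A}\m{1}_n,\ldots,\m{A}^{n-1}\m{1}_n\,]$, identify this via the PBH test with the uncontrollability locus of the pair $(\m{A},\m{1}_n)$, and then exhibit one friendly witness to certify $g\not\equiv 0$. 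Your argument is cleaner---it avoids the first-order approximations (``neglecting second-order terms'') that the paper leaves informal, it does not need a separate appeal to the simple-spectrum result of \cite{Tao2017}, and it produces an explicit polynomial whose nonvanishing characterizes friendliness---whereas the paper's perturbative viewpoint is closer in spirit to the robustness analysis used elsewhere (e.g.\ Remark~\ref{lm:equiv_problems_perturbed}). One small remark: your parameter count $m=\binom{n+1}{2}$ treats the diagonal entries as free; if the theorem is read with zero diagonal (as the paper's proof does), then $m=\binom{n}{2}$ and, as you correctly flag, the statement fails for $n=2$ while your path-graph witness handles all $n\geq 3$.
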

\begin{proof}
See Appendix \ref{app:zero_measure_set}.
\end{proof}
We, therefore, focus on matching two asymmetric graphs by implicitly assuming friendliness for each, which holds almost surely according to above results.
%

\subsection{Convex relaxation of graph matching}
The graph matching problem is to find an optimal permutation matrix $\m{\Pi}^*$ satisfying the following optimization problem:
\begin{align*}\label{eq:graph_matching}
(\text{GM})\qquad\m{\Pi}^*&=\underset{\m{\Pi}\in\mc{P}}{\text{argmin}}||\m{A}-\m{\Pi}^\top\m{B}\m{\Pi}||_F^2\\
&=\underset{\m{\Pi}\in\mc{P}}{\text{argmin}}||\m{\Pi}\m{A}-\m{B}\m{\Pi}||_F^2.\numberthis
\end{align*}
where the last equality follows from the unitarity property of permutation matrices. It is noted that although the objective function in the problem \eqref{eq:graph_matching} is a convex function of the argument $\m{\Pi}$, the permutation set $\mc{P}$ is non-convex and so the problem \eqref{eq:graph_matching} is not a convex problem. Although the set is finite, it has $n!$ elements, which means that for large $n$, exhaustive search will become infeasible. Thus, it is desirable to replace $\mc{P}$ with a larger convex set. For example, consider the space of \textit{pseudo-stochastic} matrices $\mc{D}_n:=\{\m{P}:\m{P}\m{1}_n=\m{1}_n\}$ and the following relaxation of the graph matching problem (RGM):
\begin{equation}\label{eq:RGM}
(\text{RGM})~ \m{P}^*=\underset{\m{P}\in \mc{D}_n}{\text{argmin}}||\m{P}\m{A}-\m{B}\m{P}||_F^2,.
\end{equation}
We focus in this paper just on the issue of finding the solution to the \textit{isomorphic graph matching} problem, i.e., when the two graphs in question are isomorphic. We now have the following lemma.
\begin{Lemma}\cite{Aflalo2015pnas}\label{lm:equiv_problems}
Let $\mc{G}_1$ and $\mc{G}_2$ be two asymmetric and isomorphic graphs. Then, the problems \eqref{eq:graph_matching} and \eqref{eq:RGM} are equivalent.
\end{Lemma}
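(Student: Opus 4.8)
The plan is to show that the convex relaxation \eqref{eq:RGM} possesses a \emph{unique} minimizer over the pseudo-stochastic set $\mc{D}_n$ and that this minimizer coincides with the unique permutation $\m{\Pi}^*$ solving \eqref{eq:graph_matching}; this is precisely what equivalence of the two problems means here. First I would observe that, since $\mc{G}_1$ and $\mc{G}_2$ are isomorphic, there is a permutation $\m{\Pi}^*$ with $\m{\Pi}^*\m{A}=\m{B}\m{\Pi}^*$, so the objective of \eqref{eq:graph_matching} attains the value $0$, and asymmetry makes this $\m{\Pi}^*$ the unique such permutation. Because every permutation matrix satisfies $\m{\Pi}\m{1}_n=\m{1}_n$, we have $\m{\Pi}^*\in\mc{D}_n$, so the minimum of \eqref{eq:RGM} is also $0$. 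Consequently the minimizers of \eqref{eq:RGM} are exactly the $\m{P}\in\mc{D}_n$ satisfying $\m{P}\m{A}=\m{B}\m{P}$, and it remains to prove that the only such $\m{P}$ is $\m{\Pi}^*$.

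Next I would reduce this homogeneous matrix equation to a commutation relation. Using the isomorphism $\m{B}=\m{\Pi}^*\m{A}(\m{\Pi}^*)^\top$ and substituting $\m{Q}:=(\m{\Pi}^*)^\top\m{P}$, the equation $\m{P}\m{A}=\m{B}\m{P}$ becomes $\m{Q}\m{A}=\m{A}\m{Q}$, i.e. $\m{Q}$ commutes with $\m{A}$. The feasibility constraint $\m{P}\m{1}_n=\m{1}_n$ transforms, via $(\m{\Pi}^*)^\top\m{1}_n=\m{1}_n$, into $\m{Q}\m{1}_n=\m{1}_n$. Thus the task becomes showing that the only matrix commuting with $\m{A}$ and fixing $\m{1}_n$ is the identity, whence $\m{P}=\m{\Pi}^*$.

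The spectral step is where both conditions in the definition of a friendly graph (Definition \ref{def:friendly_graphs}), which holds almost surely for asymmetric graphs by Theorem \ref{thm:zero_measure_set} together with the simple-spectrum genericity, come into play. Since $\m{A}$ is symmetric with simple spectrum, write $\m{A}=\sum_{i=1}^n\lambda_i\m{u}_i\m{u}_i^\top$ with orthonormal eigenvectors and distinct $\lambda_i$. Because the eigenvalues are distinct, the commutant of $\m{A}$ is exactly $\{\sum_{i=1}^n d_i\m{u}_i\m{u}_i^\top:d_i\in\mb{R}\}$, so $\m{Q}=\sum_i d_i\m{u}_i\m{u}_i^\top$. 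Expanding $\m{1}_n=\sum_i\beta_i\m{u}_i$ with $\beta_i=\m{u}_i^\top\m{1}_n$ and imposing $\m{Q}\m{1}_n=\m{1}_n$ gives $d_i\beta_i=\beta_i$ for every $i$. Friendliness guarantees $\beta_i\neq 0$ for all $i$, forcing $d_i=1$, hence $\m{Q}=\m{I}$ and $\m{P}=\m{\Pi}^*$.

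I expect the main obstacle to be the spectral identification of the constrained commutant rather than the bookkeeping: the relaxed feasible set $\mc{D}_n$ is far larger than the permutation or doubly-stochastic sets, so uniqueness of the minimizer is not automatic and rests squarely on both friendliness conditions---simple spectrum (to pin the commutant down to an $n$-parameter family) and non-orthogonality of every eigenvector to $\m{1}_n$ (to collapse that family to the single point $\m{Q}=\m{I}$). Losing either property would enlarge the solution set and break the equivalence, so the argument must invoke the genericity established earlier in order to legitimately assume friendliness from asymmetry.
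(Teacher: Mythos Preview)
The paper does not supply its own proof of this lemma; it simply attributes the result to \cite{Aflalo2015pnas} and, just before stating it, explicitly adopts the convention that asymmetric graphs are implicitly assumed friendly via the genericity established in Theorem~\ref{thm:zero_measure_set}. Your argument is correct and is exactly the one in that reference: reduce $\m{P}\m{A}=\m{B}\m{P}$ with $\m{P}\m{1}_n=\m{1}_n$ to $\m{Q}\m{A}=\m{A}\m{Q}$ with $\m{Q}\m{1}_n=\m{1}_n$ via $\m{Q}=(\m{\Pi}^*)^\top\m{P}$, use simple spectrum to force $\m{Q}=\sum_i d_i\m{u}_i\m{u}_i^\top$, and use $\m{u}_i^\top\m{1}_n\neq 0$ to conclude $d_i=1$, hence $\m{P}=\m{\Pi}^*$.
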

\begin{Remark}\label{lm:equiv_problems_perturbed}
In some circumstances, we might deal with isomorphic graph matching in the presence of noise.
 Let $\mc{G}_1$ and $\mc{G}_2$ be isomorphic and asymmetric graphs with spectrum radius $\sigma=\max_{i}|\lambda_i|$, related by the unique isomorphism $\m{\Pi}^*$. The adjacency matrix $\m{A}(\mc{G}_1)$ has the spectrum gap $\min_{i\neq j}|\lambda_i-\lambda_j|>\delta$ and all eigenvectors satisfy $\epsilon<|\m{u}_i^\top\m{1}_n|<\frac{1}{\epsilon}$, for $\delta,\epsilon>0$.
Let $\tilde{\m{B}}$ be a perturbed adjacency matrix of $\m{B}(\mc{G}_2)$ such that $\tilde{\m{B}}=\m{B}+\rho\m{R}$, where $\m{R}$ is symmetric with $||\m{R}||_F\leq 1$, and $\rho\leq \min\{\sqrt{2}\sigma,\frac{\delta^2\epsilon^4}{12\sigma n^{1.5}}\}$. Then, the optimal solution $\m{P}^*$ of the problem $\m{P}^*=\underset{\m{P}\in \mc{D}_n}{\text{argmin}}||\m{P}\m{A}-\tilde{\m{B}}\m{P}||_F^2,$ is unique and satisfies $||\m{P}^*-\m{\Pi}^*||_F<1/2$ \cite[Lemma 2]{Aflalo2015pnas}. The perturbation $\rho\m{R}$ characterizes the total adjacency disagreement that still allows the optimal permutation to be recovered. The conclusion of course does not actually depend on $\tilde{\m{B}}$ obtained by the addition of noise to a $\m{B}$ for which isomorphism holds, but simply on having a $\tilde{\m{B}}$ suitably close to $\m{B}$. By way of a side remark, we note that if $\m{B}$ contains zero entries, some entries of $\m{R}$ might need to be non-negative so that $\tilde{\m{B}}$ is a proper adjacency matrix.
\end{Remark}
Consequently, we can solve \eqref{eq:RGM} for $\m{P}^*$ and project it onto $\mc{P}$ to get $\hat{\m{\Pi}}$. If $\text{dis}_{\mc{G}_1\rightarrow\mc{G}_2}(\hat{\m{\Pi}})$ is small enough the graphs are isomorphic. The orthogonal projection $\m{P}^*$ onto $\mc{P}$ can be obtained by optimizing the Euclidean inner product
\begin{equation}\label{eq:permutation_projection}
\hat{\m{\Pi}}=\text{Proj}_{\mc{P}}\m{P}^*=\underset{\m{\Pi}\in \mc{P}}{\text{argmax}}~\text{tr}(\m\Pi^\top\m{P}^*),
\end{equation}
which is solvable in polynomial time using the Hungarian method \cite{Kuhn1955}. In addition, it follows from $||\m{P}^*-\m{\Pi}^*||_F<1/2$ in the preceding remark that $|(\m{P}^*)_{ij}-(\m{\Pi})_{ij}^*|<1/2$, for all $i,j$ entries. Thus, the projection $\text{Proj}_{\mc{P}}\m{P}^*$ can be simply obtained by rounding up/down the entries of $\m{P}^*$ to the nearest integer numbers in $\{0,1\}$, i.e., $\text{argmin}_{x\in \{0,1\}}|(\m{P}^*)_{ij}-x|$ entry-wise for every $i,j$. 

In the absence of noise, the optimal solution of \eqref{eq:RGM} is identical to $\m{\Pi}^*$, and hence a distributed algorithm with asymptotic stability property can compute $\m{\Pi}^*$ as $t\rightarrow \infty$. Further, such a permutation can be obtained after a finite time using projection when $||\m{P}(t)-\m{\Pi}^*||_F<1/2$, as discussed above.
In summary, there are two steps in solving the (GM) problem as summarized in Algorithm \ref{alg:GM}.

\begin{algorithm}[t]
        \caption{Centralized Computation of Graph Matching.}
        \label{alg:GM}
        \begin{algorithmic}[1]
            \Require $\m{A}$, $\m{B}$ adjacency matrices of isomorphic and asymmetric graphs.
			\State Solve the convex optimization problem \eqref{eq:RGM} for $\m{P}^*$.
			\State Project $\m{P}^*$ onto $\mc{P}$.
            \State\Return $\hat{\m{\Pi}}$;
        \end{algorithmic}
    \end{algorithm}
\section{Distributed Optimization to Solve RGM}\label{sec:distributed_algorithm}
In this part, we formulate the RGM problem \eqref{eq:RGM} as a distributed optimization problem and propose a \textit{distributed} optimization over a multi-agent network of $n$ agents to solve it.
\subsubsection*{Multi-agent network}
We assume that each agent $i$ in a network of $n$ agents only knows $\m{a}_i$ and $\m{b}_i\in \mb{R}^{n}$, the $i$th column vectors of $\m{A}$ and $\m{B}$, respectively, and can exchange information with some neighboring agents. This exchange process itself, which effectively defines the way calculations determining the optimum $\m{P}^*$ are distributed,  can be modelled by a  graph.  To differentiate with the two graphs $\mc{G}_1$ and $\mc{G}_2$ to be matched, we denote the interaction graph of the agents as $\mc{H}=(\mc{I},\mc{E}_{\mc{H}})$, where $\mc{I}=\{1,\ldots,n\}$ and $\mc{E}_{\mc{H}}\subseteq \mc{I}\times \mc{I}$ denote the index set and edge set of the agents, respectively. When agents $i$ and $j$ are neighbors, i.e., $(i,j)\in \mc{E}_{\mc{H}}$, we associate with this edge an arbitrary positive weight $w_{ij}=w_{ji}>0$; when $(i,j)\not\in \mc{E}_{\mc{H}}$, $w_{ij}=w_{ji}=0$. The graph $\mc{H}$ is assumed to be undirected and connected. 
\subsection{Geometric interpretation of the pseudo-stochastic matrix set $\mc{D}_n$ and projection operator}\label{subsec:geometrical_interp}
The sum of the elements in any row vector of a matrix in $\mc{D}_n=\{\m{P}:\m{P}\m{1}_n=\m{1}_n\}$ is one. Thus, pseudo-stochastic matrices contain $n$ rows which, if each row vector is considered as a point in the $n$-dimensional Cartesian space, belong to a hyperplane in $\mb{R}^{n}$, i.e., the plane $\sum_{k=1}^nx_k=1$, where $x_k,k=1,\ldots,n$ are the coordinates of a point vector $\m{x}\in \mb{R}^n$. Let $\mc{S}_n$ denote this plane. Then, the normal vector of the hyperplane $\mc{S}_n$ is $\m{1}_n$ (See Fig. \ref{fig:interpret_constrained_set}). For simplicity, we say a row of a matrix $\m{X}\in \mc{D}_n$ belongs to the hyperplane $\mc{S}_n$, when there is no risk of confusion.

The orthogonal projection of a vector $\m{v}\in \mb{R}^n$ onto the hyperplane $\mc{S}_n$ is given as 
\begin{equation}\label{eq:orthogonal_project}
\text{Proj}_{\mc{S}_n}(\m{v}):=(\m{I}_n-({1}/{n})\m{1}_n\m{1}_n^\top)\m{v}.
\end{equation}
Given a $n\times n$ matrix $\m{V}$, we denote by $\text{Proj}_{\mc{D}_n}(\m{V})$ the orthogonal projection of $\m{V}$ onto $\mc{D}_n$, i.e., 
\begin{equation}\label{eq:projection_of_matrix}
\text{Proj}_{\mc{D}_n}(\m{V}):=\m{V}(\m{I}_n-({1}/{n})\m{1}_n\m{1}_n^\top),
\end{equation} 
which consists of $n$ orthogonal projections of the $n$ corresponding row vectors of $\m{V}$ onto $\mc{S}_n$.
The matrix $\m{V}$ is said to be \textit{parallel} to $\mc{D}_n$ if $\text{Proj}_{\mc{D}_n}(\m{V})=\m{V}$, and is \textit{orthogonal} to $\mc{D}_n$ if $\text{Proj}_{\mc{D}_n}(\m{V})=\m{0}$.
\begin{Lemma}\label{lm:projection_property}
Let an arbitrary vector $\m{u}\in \mb{R}^n$ and any two points $\m{x},\m{y}\in \mc{S}_n$. Then, there holds
\begin{equation}
(\m{x}-\m{y})^\top\text{Proj}_{\mc{S}_n}(\m{u})=(\m{x}-\m{y})^\top\m{u}.
\end{equation}
\end{Lemma}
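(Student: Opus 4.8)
The plan is to verify the identity by a direct substitution of the projection formula \eqref{eq:orthogonal_project} and then exploit the single defining feature of the hyperplane $\mc{S}_n$, namely that its normal is $\m{1}_n$. First I would write the left-hand side using $\text{Proj}_{\mc{S}_n}(\m{u})=(\m{I}_n-(1/n)\m{1}_n\m{1}_n^\top)\m{u}=\m{u}-(1/n)\m{1}_n(\m{1}_n^\top\m{u})$, so that
\begin{equation*}
(\m{x}-\m{y})^\top\text{Proj}_{\mc{S}_n}(\m{u})=(\m{x}-\m{y})^\top\m{u}-\frac{1}{n}(\m{x}-\m{y})^\top\m{1}_n\,(\m{1}_n^\top\m{u}).
\end{equation*}
The right-hand side already matches the target up to the single correction term $\frac{1}{n}(\m{x}-\m{y})^\top\m{1}_n\,(\m{1}_n^\top\m{u})$, so the whole proof reduces to showing that this term vanishes.

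The key step is therefore to cancel the correction term. Since $\m{x},\m{y}\in \mc{S}_n$, both satisfy $\m{1}_n^\top\m{x}=\m{1}_n^\top\m{y}=1$ by the definition of the plane $\sum_{k=1}^n x_k=1$. Hence $\m{1}_n^\top(\m{x}-\m{y})=1-1=0$, i.e., the difference of any two points in $\mc{S}_n$ is orthogonal to the normal $\m{1}_n$. This annihilates the correction term and yields the claim. The geometric content is simply that the orthogonal projection onto $\mc{S}_n$ modifies $\m{u}$ only along the normal direction $\m{1}_n$, while any displacement $\m{x}-\m{y}$ lying within the plane is insensitive to that component; consequently the inner product with $\m{x}-\m{y}$ is preserved.

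There is essentially no obstacle here: the result is a one-line linear-algebra computation once the projection formula is inserted and the orthogonality $\m{1}_n^\top(\m{x}-\m{y})=0$ is noted. The only point worth flagging is that the operator in \eqref{eq:orthogonal_project} is, strictly speaking, the projection onto the linear subspace $\{\m{v}:\m{1}_n^\top\m{v}=0\}$ rather than onto the affine plane $\mc{S}_n$ itself; but because the statement involves only the difference $\m{x}-\m{y}$ (a vector parallel to $\mc{S}_n$), this distinction is immaterial and no affine offset enters the argument.
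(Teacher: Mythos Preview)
Your proof is correct and essentially identical to the paper's: both substitute the projection formula \eqref{eq:orthogonal_project}, expand, and use $(\m{x}-\m{y})\perp\m{1}_n$ (from $\m{1}_n^\top\m{x}=\m{1}_n^\top\m{y}=1$) to kill the correction term. Your additional remark about the affine-versus-linear projection is a nice clarification but does not affect the argument.
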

\begin{proof}
We have
\begin{align*}
(\m{x}-\m{y})^\top\text{Proj}_{\mc{S}_n}(\m{u})&=(\m{x}-\m{y})^\top(\m{I}_n-\frac{1}{n}\m{1}_n\m{1}_n^\top)\m{u}\\
&=(\m{x}-\m{y})^\top\m{u}-\frac{1}{n}(\m{x}-\m{y})^\top\m{1}_n\m{1}_n^\top\m{u}\\
&=(\m{x}-\m{y})^\top\m{u},
\end{align*}
where the last equality follows from $(\m{x}-\m{y})\perp\m{1}_n$, for any two points $\m{x},\m{y}\in \mc{S}_n$.
\end{proof}
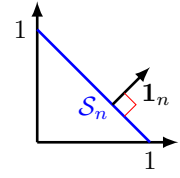
\begin{figure}[t]
\centering
\begin{tikzpicture}
\node (py) at (0,2.) [label=below left:$$]{};
\node (px) at (2.,0.) [label=below:$$]{};
\draw[{line width=1pt}] (0,0) [frame] -- (px) node [pos =0.8, yshift=0ex,below] {$1$}; 
\draw[{line width=1pt}] (0,0) [frame] -- (py) node [pos =0.8, yshift=0ex,left] {$1$};
\draw[{line width=1pt},blue] (0,1.5) -- (1.5,0) node [pos =0.5, yshift=-0.1ex, below] {$\mc{S}_n$};
\draw[{line width=1pt}] (1,0.5) -- (1.5,1.)[frame] node [pos =0.5, yshift=-0.7ex, right] {$\m{1}_n$};
\draw [right angle symbol={1,0.5}{1.5,1.}{1.5,0}, red];
\end{tikzpicture}
\caption{Geometric interpretation of the pseudo-stochastic matrix set $\mc{D}_n$. A point, whose coordinates are the elements of any row vector of a matrix in $\mc{D}_n$, lies in the hyperplane $\mc{S}_n$ in the $n$-dimensional space.}
\label{fig:interpret_constrained_set}
\end{figure}
\subsection{GM as a distributed optimization problem}
Assume that each agent $i\in \mc{I}$ knows $\m{a}_i$ and $\m{b}_i$ and holds a local estimate of the common optimal matrix $\m{P}^*$ of the problem (RGM), denoted by $\m{P}_i\in \mb{R}^{n\times n}$. The agents cooperatively estimate $\m{P}^*$ such that $\m{P}_i\rightarrow \m{P}^*,\forall i\in \mc{I}$, as $t\rightarrow \infty$. In the case of exact matching, due to $\m{A}=\m{A}^\top$ the optimal solution to the problem \eqref{eq:RGM} is equivalent to finding a matrix $\m{P}^*\in \mc{D}_n$ satisfying the system of equations
\begin{equation}\label{eq:reformulated_RGM}
\begin{cases}
\m{P}^*\m{a}_i=\m{y}_i^*,~i=1,\ldots,n,\\
\m{b}_i^\top\m{P}^*={\m{z}_i^*}^\top,~i=1,\ldots,n,\\
\m{Y}^*\triangleq [\m{y}_1^*,\ldots,\m{y}_n^*]=\m{Z}^*\triangleq\text{col}({\m{z}_1^*}^\top,\ldots,{\m{z}_n^*}^\top),\\
\m{P}^*\in \mc{D}_n,
\end{cases}
\end{equation}
where $\m{y}_i^*,\m{z}_i^*\in\mb{R}^{n}$ for all $i\in\mc{I}$. As a result, in addition to $\m{P}_i$ each agent $i\in\mc{I}$ maintains two variables $\m{y}_i$ and $\m{z}_i$. The third relation in the preceding equation is introduced to impose equality constraints involving $\m{y}_i$ and $\m{z}_i$, i.e., $\m{Y}=\m{Z}$. While the consensus constraints $\m{P}_i=\m{P}_j,\forall i,j\in \mc{I},$ can be easily treated, i.e., through distributed averaging, the coupling constraint $\m{Y}=\m{Z}$ is not separable to each agent $i,\forall i\in \mc{I}$ since it only knows the $i$th column of $\m{Y}$ and the $i$th row of $\m{Z}$. To deal with such coupling constraint, we use the following transformation, for all $i\in \mc{I}$ (see e.g., \cite{XZeng2019tac, Deng2019}):
\begin{align*}
\m{Y}&=\m{Z}\Longleftrightarrow \exists\{\m{K}_i\}_{i=1}^n,~\text{s.t.}\\
[\m{Y}]_i&-[\m{Z}]^i-\sum_{j=1}^nw_{ij}(\m{K}_i-\m{K}_j)=\m{0}_{n\times n}, \numberthis \label{eq:coupling_constraint_transf}
\end{align*}
where $$[\m{Y}]_i:=[\m{0}_{n\times(i-1)},\m{y}_i,\m{0}_{n\times(n-i)}]\in \mb{R}^{n\times n}$$ is an $n\times n$ matrix whose $i$th column is $\m{y}_i$ and other columns are zeros, 
$$[\m{Z}]^i:=\text{col}(\m{0}_{(i-1)\times n},\m{z}_i^\top,\m{0}_{(n-i)\times n})\in \mb{R}^{n\times n}$$ is a matrix of all zero row vectors except the $i$th row is being $\m{z}_i^\top$,
and $\m{K}_i\in \mb{R}^{n\times n},i=1,\ldots,n$ are used to compensate for the inconsistencies between $[\m{Y}]_i$ and $[\m{Z}]^i$. Note that $\m{Y}=\sum_{i=1}^n[\m{Y}]_i$ and $\m{Z}=\sum_{i=1}^n[\m{Z}]^i$. In addition, by summing up both sides of \eqref{eq:coupling_constraint_transf} over $i$ from $1$ to $n$ we obtain $\m{Y}=\m{Z}$.

Let $\m{X}=\text{col}(\m{P}_1,\ldots,\m{P}_n)$ be the stack matrix of all local estimates of $\m{P}^*$ at some intermediate point in the execution of the algorithm. Using the relations \eqref{eq:reformulated_RGM} and \eqref{eq:coupling_constraint_transf}, we reformulate the problem \eqref{eq:RGM} as a distributed optimization problem subject to a global set constraint and equality constraints, over the multi-agent network:
\begin{equation}\label{eq:distributed_GM_prob}
\begin{cases}
\underset{\m{X},\m{Y},\m{Z},\m{K}}{\text{argmin}}~\frac{1}{2}\sum_{i=1}^n||\m{P}_i\m{a}_i-\m{y}_i||^2\\
\text{s.t.}~\m{b}_i^\top\m{P}_i=\m{z}_i^\top,~\m{P}_i\in \mc{D}_n,\\
\sum_{j=1}^nw_{ij}(\m{P}_i-\m{P}_j)=\m{0}_{n\times n},\\
[\m{Y}]_i-[\m{Z}]^i-\sum_{j=1}^nw_{ij}(\m{K}_i-\m{K}_j)=\m{0}_{n\times n},
\end{cases}
\end{equation}
for all $i\in \mc{I}$, where $\m{K}=\text{col}(\m{K}_1,\ldots,\m{K}_n)$. In addition, $\m{P}_i\m{a}_i$ is the agent $i$'s estimate of the $i$th column vector of $\m{P}^*\m{A}$ and is assigned to the vector $\m{y}_i$, while $\m{b}_i^\top\m{P}_i$ is its estimate of the $i$th row of $\m{BP}^*$ and is stored at the row vector $\m{z}_i^\top$. These two vectors satisfy the last coupling equality constraint in \eqref{eq:distributed_GM_prob}. Consequently, the agents cooperatively learn the graph matching using knowledge of $(\m{a}_i,\m{b}_i)$ and the auxiliary state variables $(\m{P}_i,\m{y}_i,\m{z}_i,\m{K}_i)$, for all $i\in \mc{I}$. In the sequel, we develop a projected multi-agent optimization algorithm to solve the constrained optimization problem \eqref{eq:distributed_GM_prob}.
\subsection{Distributed learning scheme}
Consider the Lagrangian function of the problem \eqref{eq:distributed_GM_prob}
\begin{align*}
\mc{L}_1&=\frac{1}{2}\sum_{i=1}^n||\m{P}_i\m{a}_i-\m{y}_i||^2+\sum_{i=1}^n\langle\boldsymbol{\lambda}_i^\top,\m{b}_i^\top\m{P}_i-\m{z}_i^\top\rangle\\
&
+\sum_{i=1}^n\langle\Theta_i,\sum_{j=1}^nw_{ij}(\m{P}_i-\m{P}_j)\rangle\\
&+\sum_{i=1}^n\langle\Upsilon_i,[\m{Y}]_i-[\m{Z}]^i-\sum_{j=1}^nw_{ij}(\m{K}_i-\m{K}_j)\rangle,
\end{align*}
where $\boldsymbol{\lambda}_i\in \mb{R}^n$, $\Theta_i\in \mb{R}^{n\times n}$ and $\Upsilon_i\in \mb{R}^{n\times n}$ are Lagrange multipliers of agent $i$ associated with the equality constraints in problem \eqref{eq:distributed_GM_prob}, for all $i\in \mc{I}$. We first define
\begin{align*}
\boldsymbol{\lambda}^\top&=\text{col}(\boldsymbol{\lambda}_1^\top,\ldots,\boldsymbol{\lambda}_n^\top)\in \mb{R}^{nn},\\
\Theta&=\text{col}(\Theta_1,\ldots,\Theta_n)\in \mb{R}^{nn\times n},\\
\Upsilon&=\text{col}(\Upsilon_1,\ldots,\Upsilon_n)\in \mb{R}^{nn\times n},\\
\m{Q}&=\text{col}(\m{X},\m{Y},\m{Z},\m{K},\boldsymbol{\lambda},\Theta,\Upsilon).
\end{align*}
In addition, let $\m{Q}^*=\text{col}(\m{X}^*,\m{Y}^*,\m{Z}^*,\m{K}^*,\boldsymbol{\lambda}^*,\Theta^*,\Upsilon^*)$ be an optimal solution of \eqref{eq:distributed_GM_prob}. Then, we have the following lemma.
\begin{Lemma}
The optimal states $(\m{P}_i^*,\m{y}_i^*,\m{z}_i^*,\m{K}_i^*)$ and the Lagrange multipliers (or the dual optimal variables) $(\boldsymbol{\lambda}_i^*,\Theta_i^*,\Upsilon_i^*)$, for all $i\in \mc{I}$, satisfy the following necessary and sufficient optimality condition:


\begin{subnumcases}{\label{eq:KKT_condition}}
  \begin{split}&\text{Proj}_{\mc{D}_n}\Big\{(\m{P}_i^*\m{a}_i-\m{y}_i^*)\m{a}_i^\top+\m{b}_i{\boldsymbol{\lambda}_i^*}^\top+ \\
&\qquad\qquad+\textstyle\sum_{j=1}^nw_{ij}(\Theta_i^*-\Theta_j^*)\Big\}=\m{0} \label{eq:KKT_Pi}
\end{split}\\
\nabla_{\boldsymbol{\lambda}_i^\top}\mc{L}_1(\m{Q}^*)=\m{b}_i^\top\m{P}_i^*-{\m{z}_i^*}^\top=\m{0}\label{eq:KKT_lamda}\\
\nabla_{\Theta_i}\mc{L}_1(\m{Q}^*)=\textstyle\sum_{j=1}^nw_{ij}(\m{P}_i^*-\m{P}_j^*)=\m{0}\label{eq:KKT_Theta}\\
\begin{split}\nabla_{\Upsilon_i}\mc{L}_1(\m{Q}^*)&=[\m{Y}^*]_i-{[\m{Z}^*]^i}\\
&\qquad-\textstyle\sum_{j=1}^nw_{ij}(\m{K}_i^*-\m{K}_j^*)=\m{0}\label{eq:KKT_Upsilon}\end{split}\\
\nabla_{\m{y}_i}\mc{L}_1(\m{Q}^*)=-(\m{P}_i^*\m{a}_i-\m{y}_i^*)+(\Upsilon_i^*)_{i}^\text{C}=\m{0}\label{eq:KKT_yi}\\
\nabla_{\m{z}_i^\top}\mc{L}_1(\m{Q}^*)=-{\boldsymbol{\lambda}_i^*}^\top-(\Upsilon_i^*)_i^\text{R}=\m{0}\label{eq:KKT_zi}\\
\nabla_{\m{K}_i}\mc{L}_1(\m{Q}^*)=-\textstyle\sum_{j=1}^n(\Upsilon_i^*-\Upsilon_j^*)=\m{0}.\label{eq:KKT_Ki}
\end{subnumcases}
\end{Lemma}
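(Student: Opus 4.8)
The plan is to treat \eqref{eq:distributed_GM_prob} as a convex program and invoke the fact that, for a convex objective subject to affine equality constraints together with a convex (indeed affine) set constraint, the Karush--Kuhn--Tucker stationarity conditions are both necessary and sufficient for global optimality. First I would verify convexity: the objective $\frac{1}{2}\sum_i\abs{\m{P}_i\m{a}_i-\m{y}_i}^2$ is a convex quadratic in $(\m{X},\m{Y})$, the three equality constraints are affine in the decision variables, and $\mc{D}_n$ is an affine (hence convex) set. Because all constraints are affine, the linearity constraint qualification holds automatically, so no Slater-type condition is needed; the KKT conditions are therefore necessary for optimality, and convexity makes them sufficient as well. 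This simultaneously handles both directions of the claimed equivalence.

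Next I would derive the explicit form of the stationarity conditions by differentiating the Lagrangian $\mc{L}_1$. For every decision variable that carries no set constraint --- namely $\m{y}_i$, $\m{z}_i$, $\m{K}_i$ and the multipliers $\boldsymbol{\lambda}_i$, $\Theta_i$, $\Upsilon_i$ --- the condition is simply that the corresponding partial gradient of $\mc{L}_1$ vanish at $\m{Q}^*$. Differentiating with respect to the multipliers reproduces the primal feasibility equations \eqref{eq:KKT_lamda}--\eqref{eq:KKT_Upsilon}, while differentiating with respect to $\m{y}_i$, $\m{z}_i$, $\m{K}_i$ yields \eqref{eq:KKT_yi}--\eqref{eq:KKT_Ki} after collecting, for each shared edge, the two symmetric contributions $w_{ij}=w_{ji}$ and reading off the $i$th column $(\Upsilon_i)_i^{\text{C}}$ and $i$th row $(\Upsilon_i)_i^{\text{R}}$ from the inner products $\langle\Upsilon_i,[\m{Y}]_i\rangle$ and $\langle\Upsilon_i,[\m{Z}]^i\rangle$. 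The only nonstandard term is the stationarity for $\m{P}_i$, which is constrained to the affine set $\mc{D}_n$; here the gradient $(\m{P}_i^*\m{a}_i-\m{y}_i^*)\m{a}_i^\top+\m{b}_i{\boldsymbol{\lambda}_i^*}^\top+\sum_j w_{ij}(\Theta_i^*-\Theta_j^*)$ need not vanish, and the correct first-order condition is that it be orthogonal to $\mc{D}_n$.

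The main obstacle, and the step deserving care, is justifying the projected form of \eqref{eq:KKT_Pi}. I would argue via the variational inequality characterizing a minimizer over a convex set: optimality of $\m{P}_i^*$ over $\mc{D}_n$ (with the remaining variables held at their optimal values) is equivalent to $\langle\nabla_{\m{P}_i}\mc{L}_1(\m{Q}^*),\m{P}_i-\m{P}_i^*\rangle\geq 0$ for all $\m{P}_i\in\mc{D}_n$. Since $\mc{D}_n$ is affine, every feasible direction $\m{P}_i-\m{P}_i^*$ lies in the tangent subspace $\{\m{V}:\m{V}\m{1}_n=\m{0}\}$ and both it and its negative are admissible, so the inequality sharpens to the orthogonality $\langle\nabla_{\m{P}_i}\mc{L}_1(\m{Q}^*),\m{V}\rangle=0$ for every $\m{V}$ in that subspace. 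Finally, using the projection operator of \eqref{eq:projection_of_matrix} together with Lemma \ref{lm:projection_property}, such orthogonality to the whole tangent subspace is exactly equivalent to $\text{Proj}_{\mc{D}_n}\{\nabla_{\m{P}_i}\mc{L}_1(\m{Q}^*)\}=\m{0}$, because the projected gradient is precisely the component of the gradient lying in that subspace, so it is orthogonal to every tangent direction (including itself) iff it is zero. This yields \eqref{eq:KKT_Pi} and completes the derivation; the bookkeeping of the symmetric edge weights in the $\Theta_i$ and $\Upsilon_i$ terms is routine and is the only other place where minor care is required.
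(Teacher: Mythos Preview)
Your proposal is correct and follows essentially the same route as the paper. The paper invokes a standard KKT theorem (citing \cite[Thm.~3.34]{Ruszczynski2004}) to write the $\m{P}_i$-stationarity as $\nabla_{\m{P}_i}\mc{L}_1(\m{Q}^*)+\m{N}_{\mc{D}_n}(\m{P}_i^*)=\m{0}$ with the normal cone identified as $\{\m{x}\m{1}_n^\top:\m{x}\in\mb{R}^n\}$, and then observes that orthogonality of this normal subspace to the tangent subspace of $\mc{D}_n$ makes the condition equivalent to $\text{Proj}_{\mc{D}_n}\{\nabla_{\m{P}_i}\mc{L}_1(\m{Q}^*)\}=\m{0}$; your variational-inequality/tangent-space argument is the dual phrasing of exactly the same fact.
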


\begin{proof}
$(\m{P}_i^*,\m{y}_i^*,\m{z}_i^*,\m{K}_i^*)$ is the optimal point of \eqref{eq:distributed_GM_prob} if and only if there exist $(\boldsymbol{\lambda}_i^*,\Theta_i^*,\Upsilon_i^*)$, for all $i\in \mc{I}$, such that the following relations hold (by using a similar argument as in \cite[Thm. 3.34]{Ruszczynski2004}):
\begin{align*}
  &(\m{P}_i^*\m{a}_i-\m{y}_i^*)\m{a}_i^\top+\m{b}_i{\boldsymbol{\lambda}_i^*}^\top+ \sum_{j=1}^nw_{ij}(\Theta_i^*-\Theta_j^*)\\
  &\qquad\qquad\qquad\qquad\qquad+\m{N}_{\mc{D}_n}(\m{P}_i^*)=\m{0}, \numberthis\label{eq:optimal_condition_normal}\\
&\eqref{eq:KKT_lamda}-\eqref{eq:KKT_Ki},~\text{for all~} i=1,\ldots,n,
\end{align*}
for an $n\times n$ matrix $\m{N}_{\mc{D}_n}(\m{P}_i^*)=\m{x}\m{1}_n^\top$ with an arbitrary $\m{x}\in \mb{R}^n$, whose row vectors are in $\text{span}(\m{1}_n^\top)$ or, i.e., orthogonal to the hyperplane $\mc{S}_n$ defined in Section \ref{subsec:geometrical_interp}. Since the subspaces $\mc{D}_n$ and $\m{N}_{\mc{D}_n}$ are orthogonal in the sense that $\langle \m{\Delta},\m{x}\m{1}_n^\top \rangle=0$ for any $\m{\Delta}\in \mb{R}^{n\times n}$ parallel to $\mc{D}_n$, \eqref{eq:optimal_condition_normal} is equivalent to \eqref{eq:KKT_Pi}.
\end{proof}

Consider the augmented Lagrangian function:
\begin{equation}\label{eq:augmented_Lag}
\begin{split}
&\mc{L}_2=\mc{L}_1+\frac{1}{2}\sum_{i=1}^n\langle \m{P}_i,\sum_{j=1}^nw_{ij}(\m{P}_i-\m{P}_j)\rangle\\
&+\frac{1}{2}\sum_{i=1}^n||\m{b}_i^\top\m{P}_i-\m{z}_i^\top||^2-\frac{1}{2}\sum_{i=1}^n\langle \Upsilon_i,\sum_{j=1}^nw_{ij}(\Upsilon_i-\Upsilon_j)\rangle,
\end{split}
\end{equation} where the last three terms are augmented terms. The additional (quadratic) augmented terms in $\mc{L}_2$, which vanish at an optimal solution $\m{Q}^*$ of problem \eqref{eq:distributed_GM_prob} due to Eq. \eqref{eq:KKT_condition}, are used to impose further constraints in the positively invariant set of the system \eqref{eq:distributed_optimization}, as will be shown in Eq. \eqref{eq:dotV_simpler} below. Primal-dual gradient methods using augmented Lagrangian functions can be found in \cite{Gharesifard2014tac,XZeng2019tac, TYang2019}. We employ the primal-dual gradient method for the problem \eqref{eq:distributed_GM_prob} that evolves on the manifold $\mc{D}_n$, as described in what follows.

\begin{algorithm}[t]
        \caption{Distributed Algorithm for Solving \eqref{eq:distributed_GM_prob}.}
        \label{alg:distributed_RGM}
        \begin{algorithmic}[1]
        	\State \textbf{Initialize:} $\m{P}_i(0)\in \mc{D}_n,\m{y}_i(0)\in \mb{R}^n,\m{z}_i(0)\in \mb{R}^{n},\m{K}_i(0)\in\mb{R}^{n\times n},$ $\boldsymbol{\lambda}_i(0)\in \mb{R}^n$, $\Theta_i(0)\in \mb{R}^{n\times n}$ and $\Upsilon_i(0)\in \mb{R}^{n\times n},\forall i\in \mc{I}$.
        	\State \textbf{Update rules:}
        	 \begin{subnumcases}{\label{eq:distributed_optimization}}
        	 \begin{split}&\dot{\m{P}}_i(t)=\text{Proj}_{\mc{D}_n}\Big\{-(\m{P}_i\m{a}_i-\m{y}_i)\m{a}_i^\top-\m{b}_i\boldsymbol{\lambda}_i^\top\\
        	 &-\sum_{j=1}^nw_{ij}(\Theta_i-\Theta_j)-\sum_{j=1}^nw_{ij}(\m{P}_i-\m{P}_j)\\
        	 &-\m{b}_i(\m{b}_i^\top\m{P}_i-\m{z}_i^\top)\Big\}\label{eq:Pi_dot}
\end{split}\\
			\boldsymbol{\dot{\lambda}}_i^\top(t)=\m{b}_i^\top\m{P}_i-\m{z}_i^\top\label{eq:lambda_i_dot}\\
        	 \dot{\Theta}_i(t)=\textstyle\sum_{j=1}^nw_{ij}(\m{P}_i-\m{P}_j)\label{eq:Theta_i_dot}\\
        	\begin{split} 
        	\dot{\Upsilon}_i(t)&=[\m{Y}]_i-{[\m{Z}]^i}-\textstyle\sum_{j=1}^nw_{ij}(\m{K}_i-\m{K}_j)\\
			& \qquad-\textstyle\sum_{j=1}^nw_{ij}(\Upsilon_i-\Upsilon_j)       	
        	\label{eq:Upsilon_i_dot}
			\end{split}\\
			\dot{\m{y}}_i(t)=(\m{P}_i\m{a}_i-\m{y}_i)-(\Upsilon_i)_i^\text{C} \label{eq:y_i_dot}\\
			\dot{\m{z}}_i^\top(t)=\boldsymbol{\lambda}_i^\top+(\Upsilon_i)_i^\text{R}+\m{b}_i^\top\m{P}_i-\m{z}_i^\top \label{eq:z_i_dot}\\
			\dot{\m{K}}_i(t)=\textstyle\sum_{j=1}^nw_{ij}(\Upsilon_i-\Upsilon_j) \label{eq:K_i_dot}
        	 \end{subnumcases}
        	 for all $i=1,\ldots,n.$
        \end{algorithmic}
\end{algorithm}
We propose the following distributed algorithm to solve the problem \eqref{eq:distributed_GM_prob} based on the saddle-point dynamics of the augmented Lagrangian function $\mc{L}_2$.
\begin{align*}
\dot{\m{P}}_i&=\text{Proj}_{\mc{D}_n}(-\nabla_{\m{P}_i}\mc{L}_2),\\
\dot{\phi}_i&=-\nabla_{\phi_i}\mc{L}_2, ~\text{for} ~\phi_i\in \{\m{y}_i,\m{z}_i^\top,\m{K}_i\},\\
\dot{\psi}_i&=\nabla_{\psi_i}\mc{L}_2, ~\text{for~} \psi_i\in \{\boldsymbol{\lambda}_i^\top,\Theta_i,\Upsilon_i\},~\forall i\in\mc{I}.
\end{align*}
The continuous-time dynamics of the primal and dual variables are explicitly given in \eqref{eq:distributed_optimization} in Algorithm \ref{alg:distributed_RGM}. A projection-like gradient algorithm used to solve a linear algebraic equation in \cite{Anderson2016naco} is not straightforwardly applicable for \eqref{eq:distributed_GM_prob} due to the presence of coupling constraints in \eqref{eq:distributed_GM_prob}.
\subsection{Stability Analysis}
Assume that two asymmetric graphs $\mc{G}_1=(\mc{V},\m{A})$ and $\mc{G}_2=(\mc{V},\m{B})$ are isomorphic. We first show that $\m{P}_i$ are well-defined for all $i\in \mc{I}$.
\begin{Lemma}\label{lm:Pi_in_Dn}
Suppose $\mc{G}_1$ and $\mc{G}_2$ are two isomorphic and asymmetric graphs. Then, under the update law \eqref{eq:distributed_optimization}, for any initial matrix $\m{P}_i(0)\in \mc{D}_n$, $\m{P}_i(t)$ lies in the convex set $\mc{D}_n$, or equivalently, rows of $\m{P}_i(t)$ lie in the hyperplane $\mc{S}_n$, for all $i\in\mc{I}$, for all time $t\geq 0$.
\end{Lemma}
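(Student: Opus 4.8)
The plan is to exploit the fact that the entire right-hand side of the $\m{P}_i$-dynamics \eqref{eq:Pi_dot} is wrapped inside the projection operator $\text{Proj}_{\mc{D}_n}(\cdot)$, so that the velocity $\dot{\m{P}}_i$ is always \emph{tangent} to the affine set $\mc{D}_n$. Consequently the quantity $\m{P}_i\m{1}_n$ is conserved along trajectories, and the constraint $\m{P}_i\m{1}_n=\m{1}_n$ is preserved for all time. No Lyapunov argument or invariance principle is needed; this is a direct flow-invariance observation.

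First I would recall the explicit form of the projection operator from \eqref{eq:projection_of_matrix}, namely $\text{Proj}_{\mc{D}_n}(\m{V})=\m{V}(\m{I}_n-(1/n)\m{1}_n\m{1}_n^\top)$ for any $\m{V}\in\mb{R}^{n\times n}$. The crucial point is that right-multiplication by $\m{1}_n$ annihilates this operator: since $(\m{I}_n-(1/n)\m{1}_n\m{1}_n^\top)\m{1}_n=\m{1}_n-(1/n)\m{1}_n(\m{1}_n^\top\m{1}_n)=\m{1}_n-\m{1}_n=\m{0}_n$, we obtain $\text{Proj}_{\mc{D}_n}(\m{V})\m{1}_n=\m{0}_n$ for every $\m{V}$. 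In other words, every output of the projection has rows summing to zero, i.e. lies in the direction space of the hyperplane $\mc{S}_n$.

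Applying this to \eqref{eq:Pi_dot}, whose argument is some matrix $\m{V}_i(t)\in\mb{R}^{n\times n}$ assembled from the local variables $(\m{P}_j,\m{y}_i,\boldsymbol{\lambda}_i,\Theta_j,\m{z}_i)$, I would conclude that $\dot{\m{P}}_i(t)\m{1}_n=\text{Proj}_{\mc{D}_n}(\m{V}_i(t))\m{1}_n=\m{0}_n$ for all $t\geq 0$ and all $i\in\mc{I}$. Then, differentiating the row-sum vector, $\frac{d}{dt}\big(\m{P}_i(t)\m{1}_n\big)=\dot{\m{P}}_i(t)\m{1}_n=\m{0}_n$, so $\m{P}_i(t)\m{1}_n$ is constant in $t$. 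Since by initialization $\m{P}_i(0)\in\mc{D}_n$, i.e. $\m{P}_i(0)\m{1}_n=\m{1}_n$, it follows that $\m{P}_i(t)\m{1}_n=\m{1}_n$ for all $t\geq 0$; equivalently, every row of $\m{P}_i(t)$ remains in $\mc{S}_n$, so $\m{P}_i(t)\in\mc{D}_n$ for all $t\geq 0$.

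I do not expect a genuine obstacle here: the argument reduces to the one-line invariance observation above. The only thing to verify with care is that the \emph{whole} drift term in \eqref{eq:Pi_dot}---and not merely a part of it---sits inside the projection, so that no un-projected component can push $\m{P}_i$ off $\mc{D}_n$. This is the case by construction of the update rule, which is precisely why the projected primal-dual scheme keeps each estimate feasible with respect to the pseudo-stochastic constraint throughout the evolution.
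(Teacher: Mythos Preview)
Your proposal is correct and follows essentially the same approach as the paper: both compute $\frac{d}{dt}(\m{P}_i\m{1}_n)=\text{Proj}_{\mc{D}_n}(\Delta_i)\m{1}_n=\Delta_i(\m{I}_n-\tfrac{1}{n}\m{1}_n\m{1}_n^\top)\m{1}_n=\m{0}$ and conclude that $\m{P}_i\m{1}_n$ is conserved, hence equal to $\m{1}_n$ for all $t\geq 0$. Your write-up is slightly more detailed in spelling out why the projector kills $\m{1}_n$, but the argument is the same one-line invariance observation.
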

\begin{proof}
Consider $\frac{d}{dt}(\m{P}_i\m{1}_n)=\dot{\m{P}}_i\m{1}_n=\text{Proj}_{\mc{D}_n}(\Delta_i)\m{1}_n=\Delta_i(\m{I}_n-\frac{1}{n}\m{1}_n\m{1}_n^\top)\m{1}_n=\m{0}$, where $\Delta_i$ is the expression inside the projection operator corresponding to agent $i$ in \eqref{eq:Pi_dot}. It follows that $\m{P}_i\m{1}_n$ is time invariant under \eqref{eq:distributed_optimization} and hence $\m{P}_i\m{1}_n=\m{P}_i(0)\m{1}_n=\m{1}_n$, for all $i\in\mc{I}$, for all time $t\geq 0$. This completes the proof.
\end{proof}
Then, in the light of Lemma \ref{lm:equiv_problems}, the optimal solution of the problem \eqref{eq:distributed_GM_prob} has a unique $\m{\Pi}^*$.
\begin{Lemma}\label{lm:unique_minimizer}
Let $\mc{G}_1$ and $\mc{G}_2$ be two asymmetric and isomorphic graphs, related by an unique permutation $\m{\Pi}^*\in \mc{P}$. Then, $(\m{X}^*,\m{Y}^*,\m{Z}^*,\m{K}^*)$ is an optimal solution of the problem \eqref{eq:distributed_GM_prob} if and only if there exist $\boldsymbol{\lambda}^*\in \mb{R}^{n^2},\Theta^*\in \mb{R}^{n^2\times n},\Upsilon^*\in \mb{R}^{n^2\times n}$, such that $\m{Q}^*$ is an equilibrium point of \eqref{eq:distributed_optimization}. Moreover, such $\m{P}_i^*=\m{\Pi}^*,\forall i\in \mc{I}$.
\end{Lemma}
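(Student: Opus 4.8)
The plan is to prove the biconditional by showing that the equilibria of the dynamics \eqref{eq:distributed_optimization} are \emph{exactly} the points satisfying the KKT system \eqref{eq:KKT_condition}, and then to invoke the preceding optimality-condition lemma, which certifies that \eqref{eq:KKT_condition} is necessary and sufficient for optimality of \eqref{eq:distributed_GM_prob}. Chaining the two equivalences (equilibrium $\Leftrightarrow$ \eqref{eq:KKT_condition} $\Leftrightarrow$ optimality) then delivers the ``if and only if''. The identification $\m{P}_i^*=\m{\Pi}^*$ is handled afterwards as a separate consequence, using connectedness of $\mc{H}$ together with isomorphism of $\mc{G}_1,\mc{G}_2$.

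For the equilibrium-to-KKT reduction I would set every right-hand side in \eqref{eq:distributed_optimization} to zero and peel off the augmented terms in a bootstrapping order. First, \eqref{eq:lambda_i_dot}$=\m{0}$ and \eqref{eq:Theta_i_dot}$=\m{0}$ give immediately \eqref{eq:KKT_lamda} and \eqref{eq:KKT_Theta}; but these are precisely the augmented terms $\m{b}_i(\m{b}_i^\top\m{P}_i-\m{z}_i^\top)$ and $\sum_{j}w_{ij}(\m{P}_i-\m{P}_j)$ appearing inside \eqref{eq:Pi_dot}, so they drop out, and by linearity of $\text{Proj}_{\mc{D}_n}$ the condition \eqref{eq:Pi_dot}$=\m{0}$ collapses to \eqref{eq:KKT_Pi}. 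Likewise \eqref{eq:K_i_dot}$=\m{0}$ yields \eqref{eq:KKT_Ki} and annihilates the augmented term $\sum_j w_{ij}(\Upsilon_i-\Upsilon_j)$ in \eqref{eq:Upsilon_i_dot}, reducing it to \eqref{eq:KKT_Upsilon}; \eqref{eq:y_i_dot}$=\m{0}$ is already \eqref{eq:KKT_yi}; and \eqref{eq:z_i_dot}$=\m{0}$, after \eqref{eq:KKT_lamda} cancels $\m{b}_i^\top\m{P}_i-\m{z}_i^\top$, becomes \eqref{eq:KKT_zi}. The converse inclusion is immediate, since at any point satisfying \eqref{eq:KKT_condition} the same augmented terms vanish, so every right-hand side of \eqref{eq:distributed_optimization} is zero. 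This closes the equivalence and hence the biconditional.

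Finally, for the ``moreover'' part I would argue at an optimal $\m{Q}^*$ as follows. The vanishing of \eqref{eq:Theta_i_dot} is the Laplacian consensus condition $\sum_j w_{ij}(\m{P}_i^*-\m{P}_j^*)=\m{0}$ for all $i$; since $\mc{H}$ is connected, this forces $\m{P}_1^*=\cdots=\m{P}_n^*=:\m{P}^*$. Because $\mc{G}_1,\mc{G}_2$ are isomorphic, the point with $\m{P}_i=\m{\Pi}^*$, $\m{y}_i=\m{\Pi}^*\m{a}_i$, $\m{z}_i^\top=\m{b}_i^\top\m{\Pi}^*$, and $\m{K}_i$ chosen via \eqref{eq:coupling_constraint_transf} (solvable since $\m{\Pi}^*\m{A}=\m{B}\m{\Pi}^*$ makes $\m{Y}=\m{Z}$) is feasible with zero cost; hence the optimal value of \eqref{eq:distributed_GM_prob} is zero, forcing $\m{y}_i^*=\m{P}^*\m{a}_i$ and thus $\m{Y}^*=\m{P}^*\m{A}$. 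Summing the coupling constraint \eqref{eq:KKT_Upsilon} over $i$ and using $w_{ij}=w_{ji}$ to cancel the $\m{K}$-terms gives $\m{Y}^*=\m{Z}^*$, while \eqref{eq:KKT_lamda} with $\m{P}_i^*=\m{P}^*$ and symmetry of $\m{B}$ gives $\m{Z}^*=\m{B}\m{P}^*$. Therefore $\m{P}^*\m{A}=\m{B}\m{P}^*$, so $\m{P}^*\in\mc{D}_n$ achieves zero in \eqref{eq:RGM}; by Lemma \ref{lm:equiv_problems} this minimizer is unique and equals $\m{\Pi}^*$, whence $\m{P}_i^*=\m{\Pi}^*$ for all $i\in\mc{I}$.

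The routine part is the term-by-term matching of the equilibrium and KKT equations; the step I expect to require the most care is the identification $\m{P}^*=\m{\Pi}^*$, specifically pinning down $\m{Y}^*=\m{P}^*\m{A}$. The subtlety is that the stationarity condition \eqref{eq:KKT_yi} only yields $\m{y}_i^*=\m{P}_i^*\m{a}_i-(\Upsilon_i^*)_i^{\text{C}}$, which by itself does not force $\m{y}_i^*=\m{P}^*\m{a}_i$; it is the \emph{global} fact that the optimal cost is exactly zero (available only under isomorphism) that removes the $\Upsilon$-correction and lets the reconstruction $\m{P}^*\m{A}=\m{Y}^*=\m{Z}^*=\m{B}\m{P}^*$ go through before uniqueness is invoked.
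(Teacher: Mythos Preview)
Your proposal is correct and follows the same route as the paper, which proves the lemma in a single line by invoking the necessary and sufficient optimality condition \eqref{eq:KKT_condition} together with Lemma~\ref{lm:equiv_problems}. You have simply unpacked what the paper leaves implicit: the term-by-term verification that the equilibria of \eqref{eq:distributed_optimization} coincide with the KKT system \eqref{eq:KKT_condition} (via the bootstrapping that kills the augmented terms), and the explicit zero-cost argument under isomorphism that pins down $\m{P}_i^*=\m{\Pi}^*$ before appealing to Lemma~\ref{lm:equiv_problems}.
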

\begin{proof}
The proof follows from the necessary and sufficient condition for optimality \eqref{eq:KKT_condition} and Lemma \ref{lm:equiv_problems}.
\end{proof}

\begin{Theorem}\label{thm:global_asymp_convergence}
Let $\mc{G}_1$ and $\mc{G}_2$ be two isomorphic and asymmetric graphs. Then, every trajectory $\m{Q}(t)$ of the system \eqref{eq:distributed_optimization} in Algorithm \ref{alg:distributed_RGM} with an initial condition $\m{Q}(0)$, converges globally asymptotically to an equilibrium of \eqref{eq:distributed_optimization}. In addition, $\m{P}_i\rightarrow\m{\Pi}^*$ as $t\rightarrow \infty$, and the orthogonal projection of $\m{P}_i$ onto $\mc{P}$ is identical to $\m{\Pi}^*$ after a finite time $T>0$ when $||\m{P}_i(T)-\m{\Pi}^*||_F<1/2$, for all $i=1,\ldots,n$.
\end{Theorem}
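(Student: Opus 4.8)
The plan is to read \eqref{eq:distributed_optimization} as the saddle-point (primal-dual gradient) flow of the convex-concave augmented Lagrangian $\mc{L}_2$ and to prove convergence by a LaSalle argument anchored at a fixed equilibrium. First I would fix an equilibrium $\m{Q}^*=\text{col}(\m{X}^*,\m{Y}^*,\m{Z}^*,\m{K}^*,\boldsymbol{\lambda}^*,\Theta^*,\Upsilon^*)$, whose existence and identification with an optimal solution of \eqref{eq:distributed_GM_prob} (in particular $\m{P}_i^*=\m{\Pi}^*$) is guaranteed by Lemma \ref{lm:unique_minimizer}. By Lemma \ref{lm:Pi_in_Dn} every $\m{P}_i(t)$ stays in the affine set $\mc{D}_n$, so the flow evolves in a fixed affine subspace and, since $\text{Proj}_{\mc{D}_n}$ is linear, the entire right-hand side of \eqref{eq:distributed_optimization} is affine in $\m{Q}$; this LTI structure is used at the end.

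Next I would take the candidate $V(\m{Q})=\tfrac{1}{2}\|\m{Q}-\m{Q}^*\|^2$, i.e. the sum of squared distances of all primal and dual blocks to $\m{Q}^*$, which is positive definite and radially unbounded, so its sublevel sets are compact and (once $\dot V\le 0$ is shown) positively invariant. Differentiating along \eqref{eq:distributed_optimization}, the only subtlety is the projected block $\dot{\m{P}}_i=\text{Proj}_{\mc{D}_n}(-\nabla_{\m{P}_i}\mc{L}_2)$: here I would apply Lemma \ref{lm:projection_property} row-by-row, using $\m{P}_i,\m{P}_i^*\in\mc{D}_n$, to replace the projection by the identity inside $\langle \m{P}_i-\m{P}_i^*,\dot{\m{P}}_i\rangle=\langle \m{P}_i-\m{P}_i^*,-\nabla_{\m{P}_i}\mc{L}_2\rangle$. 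With the projection removed, the computation is the standard convex-concave saddle-point estimate: $\mc{L}_2$ is convex in the primal block $(\m{X},\m{Y},\m{Z},\m{K})$ and concave in the dual block $(\boldsymbol{\lambda},\Theta,\Upsilon)$, since the augmented weighted-Laplacian terms are convex in $\m{X}$ and concave in $\Upsilon$ (the weighted Laplacian being positive semidefinite). The first-order convexity/concavity inequalities together with the saddle-point inequalities satisfied by $\m{Q}^*$ then yield $\dot V\le 0$, and a direct computation that cancels the bilinear Lagrangian cross-terms (using the optimality relations \eqref{eq:KKT_condition}) brings $\dot V$ to the manifestly nonpositive form anticipated in \eqref{eq:dotV_simpler}, a negative sum of squares of the residuals $\m{P}_i\m{a}_i-\m{y}_i$, $\m{b}_i^\top\m{P}_i-\m{z}_i^\top$, and the Laplacian disagreements in $\m{P}_i$ and $\Upsilon_i$.

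I would then invoke LaSalle's principle (Lemma \ref{lm:LaSalle_principle}) on a compact sublevel set of $V$ containing $\m{Q}(0)$: trajectories approach the largest invariant set $M\subseteq S=\{\dot V=0\}$. On $S$ the negative-definite terms force $\m{P}_i\m{a}_i=\m{y}_i$, $\m{b}_i^\top\m{P}_i=\m{z}_i^\top$, consensus $\m{P}_i=\m{P}_j$, and—using connectivity of $\mc{H}$, so that the Laplacian kernel is exactly $\text{span}(\m{1}_n)$—also $\Upsilon_i=\Upsilon_j$ for all $i,j$. Substituting these into \eqref{eq:distributed_optimization} and imposing invariance (all derivatives frozen on $M$) reduces the remaining conditions precisely to the KKT system \eqref{eq:KKT_condition}, so $M$ consists only of equilibria, each with $\m{P}_i=\m{\Pi}^*$ by Lemma \ref{lm:unique_minimizer}. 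To upgrade convergence to the set into convergence to a single point, I would re-anchor $V$ at a point $\m{Q}^\star$ of the (nonempty, by boundedness) positive limit set, which is an equilibrium; then $V(\m{Q}(t))$ is nonincreasing yet admits a subsequence converging to $0$, forcing $\m{Q}(t)\to\m{Q}^\star$ and hence $\m{P}_i(t)\to\m{\Pi}^*$. Because the flow is affine, Lemma \ref{lm:exponen__converg_linear_system} then sharpens this to exponential convergence. The finite-time claim follows since $\m{P}_i(t)\to\m{\Pi}^*$ gives $\|\m{P}_i(T)-\m{\Pi}^*\|_F<1/2$ for some finite $T$, whence $|(\m{P}_i(T))_{jk}-(\m{\Pi}^*)_{jk}|<1/2$ entrywise and rounding to $\{0,1\}$ returns exactly $\m{\Pi}^*$, as in the discussion following \eqref{eq:permutation_projection}.

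I expect the main obstacle to be the characterization of $M$: showing that $\dot V=0$ plus invariance squeezes the state all the way down to the KKT equilibria, rather than merely onto the residual-zero surface, requires propagating the frozen-derivative conditions through the coupling variables $\m{K}_i$ and $\Upsilon_i$ and leaning on connectivity of $\mc{H}$; and since the dual equilibrium is generally nonunique, the separate limit-point argument (rather than mere set convergence) is essential to pin down a single equilibrium.
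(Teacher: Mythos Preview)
Your proposal is correct and follows the paper's approach: the same Lyapunov function $V=\tfrac12\|\m{Q}-\m{Q}^*\|^2$, removal of the projection via Lemma~\ref{lm:projection_property}, the explicit nonpositive form \eqref{eq:dotV_simpler} obtained by direct cancellation against the KKT relations \eqref{eq:KKT_condition}, LaSalle on a compact sublevel set, and then showing the largest invariant set in $\{\dot V=0\}$ consists only of equilibria (the paper does this sequentially---on $\{\dot V=0\}$ the updates \eqref{eq:lambda_i_dot}, \eqref{eq:Theta_i_dot}, \eqref{eq:K_i_dot} vanish, so $\boldsymbol{\lambda}_i,\Theta_i,\m{K}_i$ are constant, hence $\dot{\m{P}}_i$ is constant and therefore zero by boundedness, and likewise for $\m{y}_i,\m{z}_i,\Upsilon_i$---which is exactly the ``propagating the frozen-derivative conditions'' you flag as the main obstacle). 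Your re-anchoring of $V$ at a limit point to upgrade set convergence to convergence to a single equilibrium is in fact more careful than the paper, which simply asserts this step, and the exponential-rate conclusion you append via Lemma~\ref{lm:exponen__converg_linear_system} is the content of the paper's subsequent theorem rather than the present one.
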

\begin{proof}
It can be verified that the optimal solution $\m{Q}^*=\text{col}(\m{X}^*,\m{Y}^*,\m{Z}^*,\m{K}^*,\boldsymbol{\lambda}^*,\Theta^*,\Upsilon^*)$ satisfying the optimality condition \eqref{eq:KKT_condition} is an equilibrium of the system \eqref{eq:distributed_optimization}. Consider the Lyapunov function $V=\frac{1}{2}||\m{Q}-\m{Q}^*||_F^2$ which is positive definite and radially unbounded. The time derivative of $V$ along the trajectory of \eqref{eq:distributed_optimization} is given as
\begin{align*}
&\dot{V}=\text{tr}\big(\dot{\m{Q}}^\top(\m{Q}-\m{Q}^*)\big)\\
&=\text{tr}\Big\{ \sum_{i=1}^n\dot{\m{P}}_i^\top(\m{P}_i-\m{P}_i^*)+\sum_{i=1}^n\dot{\m{y}}_i^\top(\m{y}_i-\m{y}_i^*)\\
&+\sum_{i=1}^n\dot{\m{z}}_i(\m{z}_i-\m{z}_i^*)^\top+ \sum_{i=1}^n\dot{\m{K}}_i^\top(\m{K}_i-\m{K}_i^*)\\
&+\sum_{\psi_i=\boldsymbol{\lambda}_i^\top,\Theta_i,\Upsilon_i}\sum_{i=1}^n\dot{\psi}_i^\top(\psi_i-\psi_i^*)\Big\}. \numberthis \label{eq:dotV}
\end{align*}
Let $\Delta_i\in \mb{R}^{n\times n}$ be the expression inside the projection operator corresponding to agent $i$ in \eqref{eq:Pi_dot}. Then, it follows from Lemmas \ref{lm:projection_property} and \ref{lm:Pi_in_Dn} and the properties of the trace function that, for all $i\in \mc{I}$,
\begin{align*}
\text{tr}\big\{\dot{\m{P}}_i^\top(\m{P}_i-\m{P}_i^*)\big\}&=\text{tr}\big\{(\m{P}_i-\m{P}_i^*)\dot{\m{P}}_i^\top\big\}\\
&\stackrel{\eqref{eq:Pi_dot}}{=}\text{tr}\big\{(\m{P}_i-\m{P}_i^*)\big[\text{Proj}_{\mc{D}_n}(\Delta_i)\big]^\top\big\}\\
&=\sum_{j=1}^n(\m{P}_i-\m{P}_i^*)^j\text{Proj}_{\mc{S}_n}\big({(\Delta_i)^j}^\top\big)
\\
&\stackrel{Lem. \ref{lm:projection_property}}{=}\sum_{j=1}^n(\m{P}_i-\m{P}_i^*)^j{(\Delta_i)^j}^\top
\\
&=\text{tr}\big\{(\m{P}_i-\m{P}_i^*)\Delta_i^\top\big\}\\
\Leftrightarrow \text{tr}\big\{\dot{\m{P}}_i^\top(\m{P}_i-\m{P}_i^*)\big\}&=\text{tr}\big\{\Delta_i^\top(\m{P}_i-\m{P}_i^*)\big\},\numberthis \label{eq:dotV_projection_relation}
\end{align*}
where $(\cdot)^j$ denotes the $j$th row vector of the associated matrix.

\subsection*{Step 1: Negative semidefiniteness of $\dot{V}(t)$}
By using the optimality condition \eqref{eq:KKT_condition} and the relation \eqref{eq:dotV_projection_relation}, it can be shown that $\dot{V}$ in \eqref{eq:dotV} along the trajectory of \eqref{eq:distributed_optimization} has the following more concise expression (see Appendix \ref{app:negative_dotV}):
\begin{align*}
\dot{V}&=-\sum_{i=1}^n\Big\{||(\m{P}_i-\m{P}_i^*)\m{a}_i-(\m{y}_i-\m{y}_i^*)||^2\\
&\qquad+||\m{b}_i^\top\m{P}_i-\m{z}_i^\top)||^2\Big\}\\
&-\sum_{(i,j)\in \mc{E}_{\mc{H}}}w_{ij}\Big\{||\m{P}_i-\m{P}_j||_F^2+||\Upsilon_i-\Upsilon_j||_F^2\Big\},\numberthis \label{eq:dotV_simpler}
\end{align*}
which is negative semidefinite. Consequently, $V(t)$ is bounded, i.e., $V(t)\leq V(0)$. Since $V(\m{Q})$ is radially unbounded, every level set $\Omega_c=\{\m{Q}:V(\m{Q})\leq c\}$ with a positive $c\in \mb{R}$, is a compact, positively invariant set. It follows that any trajectory $\m{Q}(t)$ of the system \eqref{eq:distributed_optimization} is bounded and converges to the largest invariant set that contains $\m{Q}$ such that $\dot{V}(\m{Q})=0$ due to the LaSalle's invariance principle (Lemma \ref{lm:LaSalle_principle}).
\subsection*{Step 2: Globally Asymptotic convergence of the optimal solution}
Let the invariant set $\mc{S}_{Q}\triangleq \{\m{Q}:\dot{V}(\m{Q})=0\}=\{\m{Q}:\m{P}_i=\m{P}_j,\m{P}_i\in \mc{D}_n,\Upsilon_i=\Upsilon_j,(\m{P}_i-\m{P}_i^*)\m{a}_i=\m{y}_i-\m{y}_i^*,\m{b}_i^\top\m{P}_i=\m{z}_i^\top,\forall i,j\in \mc{I},i\neq j\}$. We consider a solution trajectory of \eqref{eq:distributed_optimization} that satisfies $\bar{\m{Q}}(t)\in \mc{S}_Q$. Then, there holds:
\begin{enumerate}[i)]
\item $\boldsymbol{\dot{\bar\lambda}}_i(t)=\m{0}, \dot{\bar\Theta}_i=\m{0}$ and $\dot{\bar{\m{K}}}_i=\m{0}$ for all $i=1,\ldots,n$. Thus, $\boldsymbol{\bar\lambda}_i,\bar\Theta_i,\m{\bar K}_i$ are constants for all $i=1,\ldots,n$.
\item $\Delta_i:=-(\bar{\m{P}}_i\m{a}_i-\bar{\m{y}}_i)\m{a}_i^\top-\m{b}_i\boldsymbol{\bar\lambda}_i^\top-\sum_{j=1}^nw_{ij}(\bar\Theta_i-\bar\Theta_j)-\sum_{j=1}^nw_{ij}(\bar{\m{P}}_i-\bar{\m{P}}_j)-\m{b}_i(\m{b}_i^\top\bar{\m{P}}_i-\m{z}_i^\top)=-(\m{P}_i^*\m{a}_i-\m{y}_i^*)\m{a}_i^\top-\m{b}_i\boldsymbol{\bar\lambda}_i^\top-\sum_{j=1}^nw_{ij}(\bar\Theta_i-\bar\Theta_j)$ is constant because $\boldsymbol{\bar\lambda}_i,\bar\Theta_i$ are constants. As a result, $\dot{\bar{\m{P}}}_i=\text{Proj}_{\mc{D}_n}(\Delta_i)$ is constant. Since $\bar{\m{P}}_i,\forall i\in \mc{I}$ is bounded, we have $\dot{\bar{\m{P}}}_i=\m{0},$ for all $i \in \mc{I}$. It follows that $\bar{\m{P}}_i,\forall i\in \mc{I}$ are constants.
\item $\bar{\m{z}}_i^\top=\m{b}_i^\top\bar{\m{P}}_i$ is constant and hence $\dot{\bar{\m{z}}}_i^\top=\m{0}$ for all $i=1,\ldots,n$. Similarly, $\bar{\m{y}}_i=(\bar{\m{P}}_i-\m{P}_i^*)\m{a}_i+\bar{\m{y}}_i^*$ is constant and consequently $\dot{\bar{\m{y}}}_i=\m{0}$, for all $i=1,\ldots,n$. As a result, $\dot{\bar\Upsilon}_i=[\bar{\m{Y}}]_i-{[\bar{\m{Z}}]^i}-\sum_{j=1}^nw_{ij}(\bar{\m{K}}_i-\bar{\m{K}}_j)$ is constant. It follows that $\dot{\bar\Upsilon}_i=\m{0}$ due to boundedness of ${\bar\Upsilon}_i$, for all $i=1,\ldots,n$.
\end{enumerate}
It follows that the largest invariant set in $\mc{S}_Q$ contains only the equilibrium point of \eqref{eq:distributed_optimization}, which is globally asymptotically stable. 
It follows from Lemma \ref{lm:unique_minimizer} that any trajectory $(\m{X},\m{Y},\m{Z},\m{K})$ with initial condition $\m{Q}(0)$ converges globally and asymptotically to an optimal solution of problem \eqref{eq:distributed_GM_prob}. Further, $\m{P}_i\rightarrow \m{\Pi}^*$ as $t\rightarrow \infty$, for all $i=1,\ldots,n$, due to Lemma \ref{lm:unique_minimizer}. It follows that $||\m{P}^*-\m{\Pi}^*||_F\rightarrow 0$ asymptotically as $t\rightarrow \infty$, and the projection of $\m{P}_i$ onto the permutation set $\mc{P}$ is identical to $\m{\Pi}^*$ after some finite time $T>0$ such that $||\m{P}_i(T)-\m{\Pi}^*||_F<1/2$, for all $i\in \mc{I}$.
\end{proof}
\subsection{Convergence rate}
In this part, we will show that the convergence of the system \eqref{eq:distributed_optimization} is exponentially fast. To proceed, let $\m{P}_{\text{proj}}:=\m{I}_n-\frac{1}{n}\m{1}_n\m{1}_n^\top$, which is a constant matrix, be the projection matrix associated with the projection operation $\text{Proj}_{\mc{D}_n}$ in \eqref{eq:projection_of_matrix}. 
\begin{Theorem}
Let $\mc{G}_1$ and $\mc{G}_2$ are two isomorphic and asymmetric graphs. Then, the trajectory $\m{Q}(t)$ of the system \eqref{eq:distributed_optimization} in Algorithm \ref{alg:distributed_RGM} with an initial condition $\m{Q}(0)$ converges globally exponentially to an equilibrium of \eqref{eq:distributed_optimization}. In addition, $\m{P}_i\rightarrow\m{\Pi}^*$ exponentially fast as $t\rightarrow \infty$.
\end{Theorem}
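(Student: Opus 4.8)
The plan is to recognize that the dynamics \eqref{eq:distributed_optimization} constitute a linear time-invariant system, and then to invoke Lemma \ref{lm:exponen__converg_linear_system} together with the global asymptotic convergence already established in Theorem \ref{thm:global_asymp_convergence}.

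First I would verify linearity. The projection operator is linear: by \eqref{eq:projection_of_matrix} we have $\text{Proj}_{\mc{D}_n}(\m{V}) = \m{V}\m{P}_{\text{proj}}$ with the constant matrix $\m{P}_{\text{proj}} = \m{I}_n - \frac{1}{n}\m{1}_n\m{1}_n^\top$. Every term on the right-hand side of \eqref{eq:Pi_dot}--\eqref{eq:K_i_dot} is a product of the fixed data $\m{a}_i, \m{b}_i, w_{ij}$ with exactly one of the state variables $(\m{P}_i, \m{y}_i, \m{z}_i, \m{K}_i, \boldsymbol{\lambda}_i, \Theta_i, \Upsilon_i)$, with no purely constant contribution. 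Hence, after stacking and vectorizing $\m{Q}$, the whole system can be written as $\dot{\m{Q}} = \m{M}\m{Q}$ for a single constant matrix $\m{M}$.

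Second, I would address the domain so as to match the hypotheses of Lemma \ref{lm:exponen__converg_linear_system}. Because Algorithm \ref{alg:distributed_RGM} initializes $\m{P}_i(0) \in \mc{D}_n$ and, by Lemma \ref{lm:Pi_in_Dn}, each $\m{P}_i(t)$ remains in the affine set $\mc{D}_n$ for all $t$, the effective state evolves on the invariant affine subspace $\mc{A} := \{\m{Q}: \m{P}_i \in \mc{D}_n,\ \forall i \in \mc{I}\}$. Since $\mc{D}_n$ is an affine subspace isomorphic to $\mb{R}^{n(n-1)}$, introducing affine coordinates $\m{q}$ on $\mc{A}$ identifies $\mc{A}$ with a full Euclidean space and renders the restricted dynamics affine, $\dot{\m{q}} = \m{M}'\m{q} + \m{b}'$, defined on all of $\mb{R}^{N'}$ for the appropriate reduced dimension $N'$. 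The constant term $\m{b}'$ arises precisely because $\mc{D}_n$ passes off-origin; this is exactly the affine form $f(\m{x}) = \m{M}\m{x} + \m{b}$ required by Lemma \ref{lm:exponen__converg_linear_system}.

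Finally, I would combine the pieces. Theorem \ref{thm:global_asymp_convergence} guarantees that every trajectory converges to an equilibrium for every admissible initial condition, i.e. for every point of $\mc{A}$, equivalently every $\m{q} \in \mb{R}^{N'}$. Applying Lemma \ref{lm:exponen__converg_linear_system} to the affine system $\dot{\m{q}} = \m{M}'\m{q} + \m{b}'$ on $\Omega = \mb{R}^{N'}$ then upgrades this convergence to exponential. Transferring back, $\m{Q}(t)$ approaches its equilibrium exponentially fast, and since Theorem \ref{thm:global_asymp_convergence} already identifies the limit through $\m{P}_i \to \m{\Pi}^*$, the convergence $\m{P}_i \to \m{\Pi}^*$ is likewise exponential. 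The hard part is the bookkeeping in the second step: one must confirm that restricting to the affine invariant set $\mc{D}_n$ — rather than working on all of $\mb{R}^{n \times n}$, where the Lyapunov argument of Theorem \ref{thm:global_asymp_convergence} relied on the projection identity of Lemma \ref{lm:projection_property} and hence on row differences lying in $\mc{S}_n$ — preserves the affine structure and furnishes a genuine $\Omega = \mb{R}^{N'}$, so that Lemma \ref{lm:exponen__converg_linear_system} applies verbatim and no initial condition of interest is excluded.
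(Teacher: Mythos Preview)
Your proposal is correct and follows essentially the same strategy as the paper: recognize that $\text{Proj}_{\mc{D}_n}$ is right-multiplication by the constant matrix $\m{P}_{\text{proj}}$, so that after vectorization the dynamics \eqref{eq:distributed_optimization} become a linear time-invariant system, and then combine Theorem~\ref{thm:global_asymp_convergence} with Lemma~\ref{lm:exponen__converg_linear_system}. Your second step---passing to affine coordinates on the invariant subspace $\mc{A}=\{\m{Q}:\m{P}_i\in\mc{D}_n\}$ so that Lemma~\ref{lm:exponen__converg_linear_system} applies with $\Omega=\mb{R}^{N'}$---is in fact more careful than the paper, which applies the lemma directly on the full ambient space without addressing that Theorem~\ref{thm:global_asymp_convergence} only certifies convergence for initial conditions with $\m{P}_i(0)\in\mc{D}_n$.
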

\begin{proof}
The projected update law \eqref{eq:Pi_dot} can be simply written as
\begin{equation}
\dot{\m{P}}_i=\Delta_i\m{P}_{\text{proj}},
\end{equation}
where $\Delta_i$ is the expression inside the projection operator in \eqref{eq:Pi_dot}. It is noted that the preceding equation is a linear system, since $\text{Proj}_{\mc{D}_n}$ is just a linear operation.
By using the preceding equation and the following relations
\begin{align*}
(\m{I}_n\otimes \m{X}^\top)\text{vec}(\m{Y}^\top)&=\m{YX},\\
(\m{X}\otimes \m{I}_n)\text{vec}(\m{Y}^\top)&=\m{XY},
\end{align*}
for any two matrices $\m{X},\m{Y}$ of suitable dimensions, the system \eqref{eq:distributed_optimization} can be expressed as a linear invariant system of $\text{vec}(\m{Q}^\top)$. Then, the conclusion on exponential convergence follows from Lemma \ref{lm:exponen__converg_linear_system} and Theorem \ref{thm:global_asymp_convergence}.
\end{proof}
\begin{Remark}
Though the time $T>0$ associated with convergence into the ball around $\m{\Pi}^*$ has been shown to exist under the conditions in Theorem \ref{thm:global_asymp_convergence}, it is not straightforward to estimate even an upper bound for this convergence time nor to determine in the course of executing the algorithm that the time has been reached. However, the evolution of $\m{P}_i(t)$ with respect to time $t$ might give some hint on whether an optimal permutation has been obtained by an agent $i\in \mc{I}$. In particular, every entry $(\m{P}_i)_{kl}(t)$ will change relatively slowly after a sufficiently large time $T_1>0$ due to the exponential convergence of $\m{P}_i(t)$ to $\m{\Pi}^*$. In addition, $|(\m{P}_i)_{kl}(t)-(\m{\Pi})_{kl}^*|$ will also remain sufficiently small for all entries $k,l$, and for a proper projected permutation $\m{\Pi}^*=\text{Proj}_{\mc{P}}\m{P}_i(t)$, after a sufficiently large time.
\end{Remark}
\section{Simulation}\label{sec:sim}
Consider two weighed and connected graphs $\mc{G}_1,\mc{G}_2$ of six vertices given in Fig. \ref{fig:sim_graph}. In addition, the adjacency matrices associated with $\mc{G}_1,\mc{G}_2$ are explicitly given as
\begin{align}
\m{A}&=\left[\begin{smallmatrix}
0&1&0&0&0.95&0\\
    1&0&0.9&0&0.85&0\\
    0&0.9&0&1.5&0&0\\
    0&0&1.5&0&1.75&0\\
    0.95&0.85&0&1.75&0&0.8\\
    0&0&0&0&0.8&0
\end{smallmatrix}\right],\\
\m{B}&=\left[\begin{smallmatrix}
0&0&0.95&1.75&0.8&0.85\\
    0&0&0&1.5&0&0.9\\
    0.95&0&0&0&0&1\\
    1.75&1.5&0&0&0&0\\
    0.8&0&0&0&0&0\\
    0.85&0.9&1&0&0&0
\end{smallmatrix}\right].
\end{align}
It is noted that the two graphs are asymmetric and isomorphic.
The computation graph $\mc{H}$ of the multi-agent network is chosen to be identical to $\mc{G}_1$.
The vertex-to-vertex matchings between $\mc{G}_1$ and $\mc{G}_2$ are illustrated in Fig. \ref{fig:matching_result}. The optimal permutation matrix is given as
\begin{equation}
\m{\Pi}^*=\left[\begin{smallmatrix}
	 0     &0     &0     &0     &1     &0\\
     0     &0     &1     &0     &0     &0\\
     1     &0     &0     &0     &0     &0\\
     0     &0     &0     &1     &0     &0\\
     0     &0     &0     &0     &0     &1\\
     0     &1     &0     &0     &0     &0
\end{smallmatrix}\right].
\end{equation}
It is observed from Fig. \ref{fig:deltaPi} that $\m{P}_i\rightarrow \m{\Pi}^*$ asymptotically as $t\rightarrow \infty$, for every $i=1,\ldots,6$. In addition, the adjacency distortion $||\m{P}_1\m{A}-\m{B}\m{P}_1||_F^2$ converges to zero asymptotically as $t\rightarrow \infty$ (Fig. \ref{fig:distortion}).
\begin{figure}[t]
\centering
\begin{tikzpicture}[scale=1.]
\node[place] (4) at (1,0.) [label=right:$4$] {};
\node[place] (3) at (0,0) [label=left:$3$] {};
\node[place] (1) at (0,1.6) [label=left:$1$] {};
\node[place] (2) at (0,1.) [label=left:$2$] {};
\node[place] (5) at (1.,1.) [label=right:$5$] {};
\node[place] (6) at (1.,2) [label=right:$6$] {};
\node[] (g1) at (0.5,-0.2) [label=below:$\mathcal{G}_1$] {};

\draw (2) [line width=1pt] -- node [left] {} (3);
\draw (3) [line width=1pt] -- node [left] {} (4);
\draw (4) [line width=1pt] -- node [left] {} (5);
\draw (2) [line width=1pt] -- node [left] {} (5);
\draw (2) [line width=1pt] -- node [left] {} (1);
\draw (1) [line width=1pt] -- node [left] {} (5);
\draw (6) [line width=1pt] -- node [left] {} (5);

\node[place] (4') at (1+2.5,0.) [label=right:$4$] {};
\node[place] (2') at (0+2.5,0) [label=left:$2$] {};
\node[place] (3') at (1+2.5,1.6) [label=above:$3$] {};
\node[place] (6') at (0+2.5,1.) [label=left:$6$] {};
\node[place] (1') at (1.+2.5,1.) [label=right:$1$] {};
\node[place] (5') at (0.5+2.5,0.5) [label=right:$5$] {};
\node[] (g1) at (0.5+2.5,-0.2) [label=below:$\mathcal{G}_2$] {};

\draw (2') [line width=1pt] -- node [left] {} (6');
\draw (2') [line width=1pt] -- node [left] {} (4');
\draw (4') [line width=1pt] -- node [left] {} (1');
\draw (6') [line width=1pt] -- node [left] {} (3');
\draw (3') [line width=1pt] -- node [left] {} (1');
\draw (5') [line width=1pt] -- node [left] {} (1');
\draw (6') [line width=1pt] -- node [left] {} (1');
\end{tikzpicture}
\caption{Two isomorphic and asymmetric graphs $\mc{G}_1,\mc{G}_2$.}
\label{fig:sim_graph}
\end{figure}
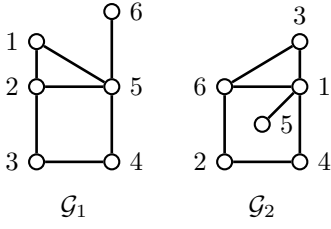

\begin{figure*}[t]
\centering
\begin{subfigure}[b]{0.32\textwidth}
\centering
\begin{tikzpicture}[scale=1.]
\node[place] (4) at (1,0.) [label=right:$4$] {};
\node[place] (3) at (0,0) [label=left:$3$] {};
\node[place] (1) at (0,1.6) [label=left:$1$] {};
\node[place] (2) at (0,1.) [label=left:$2$] {};
\node[place] (5) at (1.,1.) [label=right:$5$] {};
\node[place] (6) at (1.,2) [label=right:$6$] {};
\node[] (g1) at (0.5,-0.2) [label=below:$\mathcal{G}_1$] {};

\draw (2) [line width=1pt] -- node [left] {} (3);
\draw (3) [line width=1pt] -- node [left] {} (4);
\draw (4) [line width=1pt] -- node [left] {} (5);
\draw (2) [line width=1pt] -- node [left] {} (5);
\draw (2) [line width=1pt] -- node [left] {} (1);
\draw (1) [line width=1pt] -- node [left] {} (5);
\draw (6) [line width=1pt] -- node [left] {} (5);

\node[place] (4') at (1+2.5,0.+0.5) [label=right:$4$] {};
\node[place] (2') at (0+2.5,0+0.5) [label=below:$2$] {};
\node[place] (3') at (1+2.5,1.6+0.5) [label=above:$3$] {};
\node[place] (6') at (0+2.5,1.+0.5) [label=above:$6$] {};
\node[place] (1') at (1.+2.5,1.+0.5) [label=right:$1$] {};
\node[place] (5') at (0.5+2.5,0.5+0.5) [label=right:$5$] {};
\node[] (g1) at (0.5+2.5,-0.2+0.5) [label=below:$\mathcal{G}_2$] {};

\draw (2') [line width=1pt] -- node [left] {} (6');
\draw (2') [line width=1pt] -- node [left] {} (4');
\draw (4') [line width=1pt] -- node [left] {} (1');
\draw (6') [line width=1pt] -- node [left] {} (3');
\draw (3') [line width=1pt] -- node [left] {} (1');
\draw (5') [line width=1pt] -- node [left] {} (1');
\draw (6') [line width=1pt] -- node [left] {} (1');
\draw (5) [line width=1pt,dashed,red] -- node [left] {} (1');
\draw (3) [line width=1pt,dashed,red] -- node [left] {} (2');
\draw (4) [line width=1pt,dashed,red] -- node [left] {} (4');
\draw (1) [line width=1pt,dashed,red] -- node [left] {} (3');
\draw (6) [line width=1pt,dashed,red] -- node [left] {} (5');
\draw (2) [line width=1pt,dashed,red] -- node [left] {} (6');
\end{tikzpicture}
\caption{}
\label{fig:matching_result}
\end{subfigure}
\begin{subfigure}[b]{0.32\textwidth}
\centering
\includegraphics[height=4.3cm]{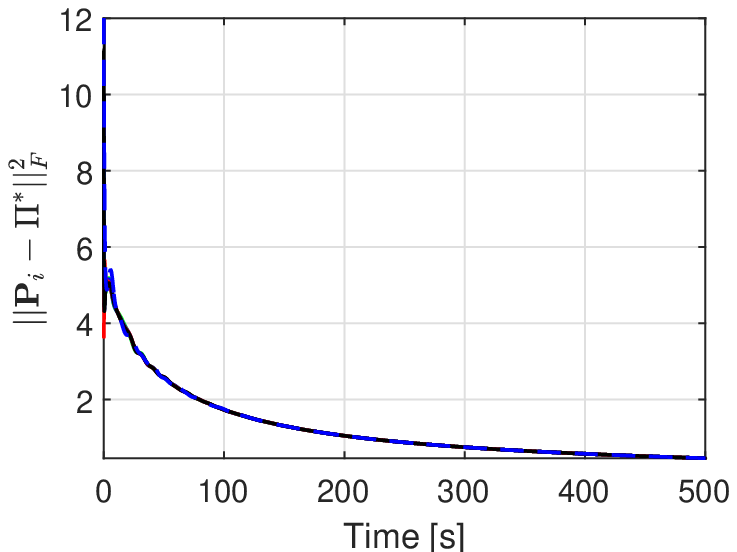}
\caption{}
\label{fig:deltaPi}
\end{subfigure}
\begin{subfigure}[b]{0.32\textwidth}
\centering
\includegraphics[height=4.3cm]{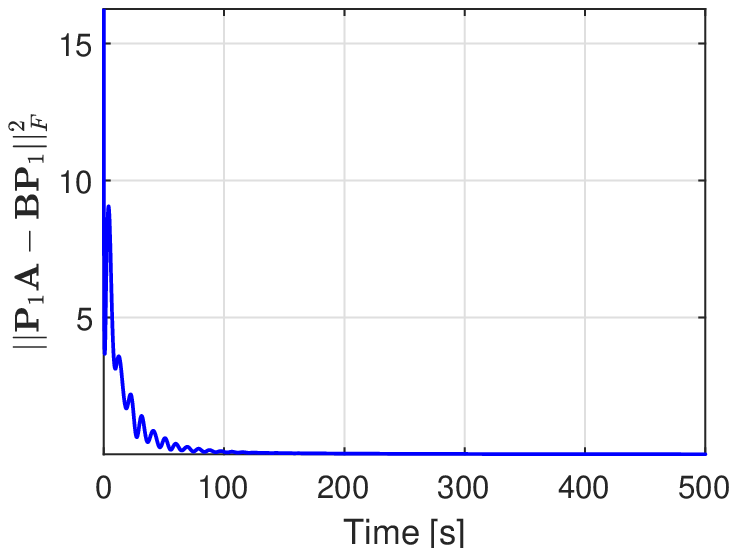}
\caption{}
\label{fig:distortion}
\end{subfigure}
\caption{Isomorphic graph matching between $\mc{G}_1$ and $\mc{G}_2$. (a) Matching results: vertex-to-vertex correspondences (dashed lines). (b) $||\m{P}_i-\m{\Pi}^*||_F^2,i=1,\ldots,6$ vs. time. (c) Adjacency disagreement $||\m{P}_1\m{A}-\m{B}\m{P}_1||_F^2$ vs. time.}
\label{fig:sim_graph_matching}
\end{figure*}

\section{Conclusion}\label{sec:conclusion}
In this work, we presented a distributed algorithm to compute the graph matching between two isomorphic and asymmetric graphs, over a multi-agent network. We first formulated the problem as a distributed optimization problem subject to equality constraints and a set constraint, and then proposed a continuous-time distributed algorithm to solve it. Given a small adjacency perturbation, we showed that the agents can compute the optimal permutation that matches two isomorphic graphs for all initial conditions and with exponential convergence. In addition, using the orthogonal projection onto the permutation set, the optimal permutation matrix can be obtained after a finite time. Simulation results were also provided.

There are several possible directions for future research. For example, it is desirable to investigate the distributed computation of graph matching of asymmetric graphs with large adjacency disagreement and graph matching of symmetric graphs. 

\appendix
\renewcommand{\thesection}{A\arabic{section}}
%
%
\subsection{Proof of Theorem \ref{thm:zero_measure_set}}\label{app:zero_measure_set}
Let $\m{A}\in \mb{R}^{n\times n}$ be a real adjacency matrix whose entries are nonnegative and has an eigenvector orthogonal to $\m{1}_n$. 
Let $\m{A}=\m{VAV}^\top$ be the eigenvalue decomposition of $\m{A}$, with $\m{V}=[\m{v}_1,\ldots,\m{v}_n]$ contains orthonormal eigenvectors corresponding to distinct eigenvalues $\lambda_i,i=1,\ldots,n$ of $\m{A}$. We will show that a perturbed version $\tilde{\m{A}}=\m{A} +\delta\m{A}$, with an arbitrary small perturbation, $\delta\m{A}\in \mb{R}^{n\times n}$, will have eigenvectors which are almost surely (a.s.) not orthogonal to $\m{1}_n$. Suppose that the perturbation $\delta\m{A}$ is symmetric and has zero diagonal entries $(\delta\m{A})_{jj}=0,\forall j=1,\ldots,n$. In addition, $(\delta\m{A})_{jk}\geq 0$ whenever $(\m{A})_{jk}=0,j\neq k$, and are real scalars otherwise, so that $\tilde{\m{A}}$ is a proper adjacency matrix for sufficiently small entries of $\delta\m{A}$.
The new eigenvalues and eigenvectors of $\tilde{\m{A}}$ can be expressed as $\tilde{\lambda}_i=\lambda_i+\delta\lambda_i$ and $\tilde{\m{v}}_i=\m{v}_i+\delta\m{v}_i$ with $\tilde{\m{v}}_i^\top\tilde{\m{v}}_i=1$, for some small $\delta\lambda_i\in \mb{R}$ and $\delta\m{v}_i\in \mb{R}^n$. Since $\{\m{v}_i\}_1^n$ is an orthonormal basis of $\mb{R}^n$, $\delta\m{v}_i=\sum_{k=1}^n\beta_{ik}\m{v}_k$ for a unique set of small scalars $\{\beta_{ik}\}_{k=1}^n$. Then, we have
\begin{align*}
(\m{A} +\delta\m{A})(\m{v}_i+\delta\m{v}_i)&=(\lambda_i+\delta\lambda_i)(\m{v}_i+\delta\m{v}_i)
\\
\Leftrightarrow \m{A}\delta\m{v}_i+\delta\m{A}\m{v}_i&=\lambda_i\delta\m{v}_i+\delta\lambda_i\m{v}_i
\\
\Leftrightarrow \m{A}\sum_{k=1}^n\beta_{ik}\m{v}_k+\delta\m{A}\m{v}_i&=\lambda_i\sum_{k=1}^n\beta_{ik}\m{v}_i+\delta\lambda_i\m{v}_i
\\
\Leftrightarrow \sum_{k=1}^n\beta_{ik}\lambda_k\m{v}_k+\delta\m{A}\m{v}_i&=\lambda_i\sum_{k=1}^n\beta_{ik}\m{v}_k+\delta\lambda_i\m{v}_i, \numberthis \label{eq:pertubed_eigenvector_value}
\end{align*}
where the second equality follows from $\m{Av}_i=\lambda_i\m{v}_i$ and by neglecting second-order terms. 
Left multiplying by $\m{v}_i^\top$ on both sides of the preceding relation gives
\begin{equation}
\delta\lambda_i = \m{v}_i^\top\delta\m{A}\m{v}_i\leq |\lambda_{max}(\delta\m{A})|. \label{eq:delta_lambda}
\end{equation}
Thus, by choosing $||\delta\m{A}||$ sufficiently small $\tilde{\m{A}}$ will still have simple spectrum.  
Left multiplying by $\m{v}_j^\top,j\neq i$ on both sides of \eqref{eq:pertubed_eigenvector_value} yields
\begin{align*}
\beta_{ij}\lambda_j+\m{v}_j^\top\delta\m{A}\m{v}_i&=\beta_{ij}\lambda_i\\
\Leftrightarrow \beta_{ij}&= f_{ij}(\delta\m{A})\triangleq\m{v}_j^\top\delta\m{A}\m{v}_i/(\lambda_i-\lambda_j),~\forall j\neq i. \numberthis \label{eq:beta_ij}
\end{align*}
From the unity condition $\tilde{\m{v}}_i^\top\tilde{\m{v}}_i=1\Leftrightarrow (\m{v}_i+\delta\m{v}_i)^\top(\m{v}_i+\delta\m{v}_i)=1\Leftrightarrow 1+2\m{v}_i^\top\delta\m{v}_i=1\Leftrightarrow\beta_{ii}=0$, where we use $||\delta\m{v}_i||^2\approx 0$. As a result, suppose that $\m{v}_i\perp \m{1}_n$, then $\tilde{\m{v}}_i\perp \m{1}_n$ when $\delta\m{v}_i=\sum_{k=1,k\neq i}^n\beta_{ik}\m{v}_k =\m{0}$ for sufficiently small $||\delta\m{A}||$. It follows that $\beta_{ik}=0\Leftrightarrow f_{ij}(\delta\m{A})=0$, for all $j=1,\ldots,n,j\neq i$, due to the mutually linear independence of $\{\m{v}_k\}$. Let $\delta \m{a}\in \mb{R}^{m_i}_+\times \mb{R}^{p_i}$ be a vector containing $m_i$ nonnegative upper-diagonal terms and the other $p_i=(n(n-1)/2-m_i)$ real upper-diagonal terms of $\delta\m{A}$, respectively.
Then, for a small open ball $\mc{B}_{\epsilon}(\m{0})$ the set 
$$\Omega_{\delta \m{a}}=\{\delta \m{a}\in \mc{B}_{\epsilon}(\m{0})\cap (\mb{R}^{m_i}_+\times \mb{R}^{p_i}):\delta\m{v}_i=\m{0}\}$$
is either a set of measure zero or the entire set $\mc{B}_{\epsilon}(\m{0})\cap(\mb{R}^{m_i}_+\times \mb{R}^{p_i})$ \cite{Caron2005}.
It follows from $\delta\m{A}\m{v}_i\perp\m{v}_j,\forall j=1,\ldots,n,j\neq i$ that $\delta\m{A}\m{v}_i=\gamma\m{v}_i$ for a scalar $\gamma$. Suppose that for all $\delta \m{a}\in \mc{B}_{\epsilon}(\m{0})\cap(\mb{R}^{m_i}_+\times \mb{R}^{p_i})$ there holds $\delta\m{A}\m{v}_i=\gamma\m{v}_i$ for a scalar $\gamma$ and nonzero $\m{v}_i=[{v}_{i1},\ldots,{v}_{in}]^\top$. Choose $\delta\m{A}=\left[\begin{smallmatrix}
0 &c &\m{0}\\
c &0 &\m{0}\\
\m{0} &\m{0} &\m{0}
\end{smallmatrix}\right]$ for a nonzero scalar $c$ and $\m{0}$ matrices of proper dimensions, then $\delta\m{A}\m{v}_i=\gamma\m{v}_i$ leads to $c v_{i2}=\gamma v_{i1}$ and $v_{ij}=0,\forall j=3,\ldots,n$. Similarly, select $(\delta\m{A})_{23}=(\delta\m{A})_{32}=c$ and the other entries are zeros, for a nonzero scalar $c$, then $\delta\m{A}\m{v}_i=\gamma\m{v}_i$ leads to $v_{i1}=0$. Consequently, $\m{v}_i\equiv \m{0}$, which is a contradiction, and hence $\Omega_{\delta \m{a}}$ is a set of measure zero.

Thus, $\tilde{\m{v}}_i$ is not orthogonal to $\m{1}_n$ for all $\m{A}$ outside a set of measure zero. This completes the proof. \eprf
\subsection{Proof of negative semi-definiteness of $\dot{V}$}\label{app:negative_dotV}
\begin{proof}
To proceed, we consider the following relations:
\begin{align*}
&\text{tr}\Big\{ \big[ -(\m{P}_i\m{a}_i-\m{y}_i)\m{a}_i^\top-\m{b}_i\boldsymbol{\lambda}_i^\top\\
&-\textstyle\sum_{j=1}^nw_{ij}(\Theta_i-\Theta_j)\big]^\top(\m{P}_i-\m{P}_i^*)\Big\}
\\
&\stackrel{\eqref{eq:KKT_Pi}}{=}\text{tr}\Big\{\Big[-\m{a}_i^\top\big((\m{P}_i-\m{P}_i^*)\m{a}_i-(\m{y}_i-\m{y}_i^*)\big)^\top\\
&-(\boldsymbol{\lambda}_i-\boldsymbol{\lambda}_i)\m{b}_i^\top 
\\
&-\textstyle\sum_{j=1}^nw_{ij}\big(\Theta_i-\Theta_j-(\Theta_i^*-\Theta_j^*)\big)\Big]^\top(\m{P}_i-\m{P}_i^*)\Big\}
\\
&=\text{tr}\Big\{-\Big[\big((\m{P}_i-\m{P}_i^*)\m{a}_i-(\m{y}_i-\m{y}_i^*)\big)^\top(\m{P}_i-\m{P}_i^*)\m{a}_i\\
&-(\boldsymbol{\lambda}_i-\boldsymbol{\lambda}_i)\m{b}_i^\top\\
&-\textstyle\sum_{j=1}^nw_{ij}\big(\Theta_i-\Theta_j-(\Theta_i^*-\Theta_j^*)\big)\Big]^\top(\m{P}_i-\m{P}_i^*)\Big\}. \numberthis \label{eq:relation1}
\end{align*}
Furthermore,
\begin{align*}
&\text{tr}\{\dot{\m{y}}_i^\top(\m{y}_i-\m{y}_i^*)\}=\text{tr}\Big\{\big[(\m{P}_i\m{a}_i-\m{y}_i)^\top-{(\Upsilon_i)_i^\text{C}}^\top\big]
\\
&\times(\m{y}_i-\m{y}_i^*)\Big\}\stackrel{\eqref{eq:KKT_yi}}{=}\text{tr}\Big\{\big[(\m{P}_i-\m{P}_i^*)\m{a}_i-(\m{y}_i-\m{y}_i^*)\big]^\top
\\
&\times(\m{y}_i-\m{y}_i^*) -\big({(\Upsilon_i)_i^\text{C}}^\top-{(\Upsilon_i^*)_i^\text{C}}^\top\big)(\m{y}_i-\m{y}_i^*)\Big\}.\numberthis \label{eq:relation2}
\end{align*}
Combining the first term in \eqref{eq:relation1} with the first term in \eqref{eq:relation2} gives
\begin{align*}
&\text{tr}\Big\{-\big((\m{P}_i-\m{P}_i^*)\m{a}_i-(\m{y}_i-\m{y}_i^*)\big)^\top(\m{P}_i-\m{P}_i^*)\m{a}_i\\
&+\big[(\m{P}_i-\m{P}_i^*)\m{a}_i-(\m{y}_i-\m{y}_i^*)\big]^\top(\m{y}_i-\m{y}_i^*)\Big\}\\
&=-||(\m{P}_i-\m{P}_i^*)\m{a}_i-(\m{y}_i-\m{y}_i^*)||^2. \numberthis \label{eq:relation3}
\end{align*}
Summing the two terms in the sum \eqref{eq:dotV} corresponding to the last term in \eqref{eq:Pi_dot} and the last term in \eqref{eq:z_i_dot} we have
\begin{align*}
&\text{tr}\big\{-(\m{b}_i^\top\m{P}_i-\m{z}_i^\top)^\top\m{b}_i^\top(\m{P}_i-\m{P}_i^*)\\
&\qquad+(\m{b}_i^\top\m{P}_i-\m{z}_i^\top)^\top(\m{z}_i-\m{z}_i^*)^\top\big\}\\
&=-\text{tr}\big\{(\m{b}_i^\top\m{P}_i-\m{z}_i^\top)^\top\big(\m{b}_i^\top(\m{P}_i-\m{P}_i^*)-(\m{z}_i-\m{z}_i^*)^\top\big) \big\}\\
&\stackrel{\eqref{eq:KKT_lamda}}{=}-||\m{b}_i^\top\m{P}_i-\m{z}_i^\top||^2.\numberthis \label{eq:relation4}
\end{align*}
The last term in \eqref{eq:Pi_dot} in the sum \eqref{eq:dotV} is given as
\begin{align*}
&\text{tr}\big\{ -\textstyle\sum_{i=1}^n\sum_{j=1}^nw_{ij}(\m{P}_i-\m{P}_j)^\top(\m{P}_i-\m{P}_i^*)\big\}\\
&\stackrel{\eqref{eq:KKT_Theta}}{=}-\textstyle\sum_{(i,j)\in \mc{E}_{\mc{H}}}w_{ij}(\m{P}_i-\m{P}_j)^\top(\m{P}_i-\m{P}_j)\big\}\\
&=-\textstyle\sum_{(i,j)\in \mc{E}_{\mc{H}}}w_{ij}||\m{P}_i-\m{P}_j||_F^2. \numberthis \label{eq:relation5}
\end{align*}
\begin{align*}
&\text{tr}\{\textstyle\sum_{i=1}^n\dot{\Theta_i}^\top(\Theta_i-\Theta_i^*)\}\\
&=\text{tr}\big\{\textstyle\sum_{i=1}^n\sum_{j=1}^nw_{ij}(\m{P}_i-\m{P}_j)^\top (\Theta_i-\Theta_i^*)\big\}\\
&\stackrel{\eqref{eq:KKT_Theta}}{=}\text{tr}\big\{\textstyle\sum_{(i,j)\in \mc{E}_{\mc{H}}}w_{ij}(\Theta_i-\Theta_j-(\Theta_i^*-\Theta_j^*))^\top(\m{P}_i-\m{P}_j)\big\}\\
&=\text{tr}\big\{\textstyle\sum_{i=1}^n\sum_{j=1}^nw_{ij}(\Theta_i-\Theta_j-(\Theta_i^*-\Theta_j^*))^\top(\m{P}_i-\m{P}_i^*)\big\}, \numberthis \label{eq:relation7}
\end{align*}
which cancels out the last term in \eqref{eq:relation1}. By \eqref{eq:K_i_dot} we have
\begin{align*}
&\text{tr}\big\{\textstyle\sum_{i=1}^n\dot{\m{K}}_i(\m{K}_i-\m{K}_i^*)\big\}\\
&=\text{tr}\big\{\textstyle\sum_{i=1}^n\sum_{j=1}^nw_{ij}(\Upsilon_i-\Upsilon_j)^\top(\m{K}_i-\m{K}_i^*) \big\}\\
&\stackrel{\eqref{eq:KKT_Ki}}{=}\text{tr}\Big\{\sum_{i=1}^n\sum_{j=1}^nw_{ij}(\Upsilon_i-\Upsilon_j-(\Upsilon_i^*-\Upsilon_j^*))^\top(\m{K}_i-\m{K}_i^*) \\
&=\text{tr}\big\{\textstyle\sum_{i=1}^n\sum_{j=1}^nw_{ij}(\Upsilon_i-\Upsilon_i^*)^\top(\m{K}_i-\m{K}_i^*) \\
&-\textstyle\sum_{i=1}^n\sum_{j=1}^nw_{ij}(\Upsilon_j-\Upsilon_j^*)^\top(\m{K}_i-\m{K}_i^*)\big\}. \numberthis \label{eq:relation8}
\end{align*}
Following \eqref{eq:Upsilon_i_dot} we have
\begin{align*}
&\text{tr}\{\textstyle\sum_{i=1}^n\dot{\Upsilon_i}^\top(\Upsilon_i-\Upsilon_i^*)\}\\
&=\text{tr}\Big\{\textstyle\sum_{i=1}^n\big([\m{Y}]_i-{[\m{Z}]^i}\big)^\top(\Upsilon_i-\Upsilon_i^*)\\
&-\textstyle\sum_{i=1}^n\sum_{j=1}^nw_{ij}\big[(\m{K}_i-\m{K}_j)-(\Upsilon_i-\Upsilon_j)\big]^\top(\Upsilon_i-\Upsilon_i^*)\Big\}
\\
&\stackrel{\eqref{eq:KKT_Upsilon}}{=}\text{tr}\Big\{\textstyle\sum_{i=1}^n\big([\m{Y}]_i-[\m{Z}]^i-[\m{Y}^*]_i+[\m{Z}^*]^i\big)^\top(\Upsilon_i-\Upsilon_i^*)\\
&-\textstyle\sum_{i=1}^n\sum_{j=1}^nw_{ij}\big(\m{K}_i-\m{K}_j-\m{K}_i^*+\m{K}_j^*\big)^\top(\Upsilon_i-\Upsilon_i^*)\Big\}\\
&-\textstyle\sum_{i=1}^n\sum_{j=1}^nw_{ij}(\Upsilon_i-\Upsilon_j)^\top(\Upsilon_i-\Upsilon_i^*)
\\
&=\text{tr}\Big\{\textstyle\sum_{i=1}^n\big([\m{Y}]_i-[\m{Z}]^i-[\m{Y}^*]_i+[\m{Z}^*]^i\big)^\top(\Upsilon_i-\Upsilon_i^*)\\
&-\textstyle\sum_{i=1}^n\sum_{j=1}^nw_{ij}\big(\m{K}_i-\m{K}_i^*\big)^\top(\Upsilon_i-\Upsilon_i^*)\\
&+\textstyle\sum_{i=1}^n\sum_{j=1}^nw_{ij}\big(\m{K}_j-\m{K}_j^*\big)^\top(\Upsilon_i-\Upsilon_i^*)\\
&-\textstyle\sum_{(i,j)\in \mc{E}_{\mc{H}}}w_{ij}||\Upsilon_i-\Upsilon_j||_F^2. \numberthis \label{eq:relation9}
\end{align*}
Combining \eqref{eq:relation8} and \eqref{eq:relation9} gives
\begin{align*}
&\eqref{eq:relation8}+\eqref{eq:relation9}=-\textstyle\sum_{(i,j)\in \mc{E}_{\mc{H}}}w_{ij}||\Upsilon_i-\Upsilon_j||_F^2\\
&+\text{tr}\Big\{\textstyle\sum_{i=1}^n\big([\m{Y}]_i-[\m{Y}^*]_i\big)^\top(\Upsilon_i-\Upsilon_i^*)\\
& -\textstyle\sum_{i=1}^n([\m{Z}]^i-[\m{Z}^*]^i)^\top(\Upsilon_i-\Upsilon_i^*)\Big\}. \numberthis \label{eq:relation10}
\end{align*}
Lastly, using the relation \eqref{eq:KKT_lamda} and \eqref{eq:KKT_zi}, we have
\begin{align*}
&\text{tr}\Big\{\textstyle\sum_{i=1}^n\boldsymbol{\dot{\lambda}}_i(\boldsymbol{\lambda}_i-\boldsymbol{\lambda}_i^*)^\top+\sum_{i=1}^n(\boldsymbol{\lambda}_i^\top+(\Upsilon_i)^\text{R}_i)^\top(\m{z}_i-\m{z}_i^*)^\top\Big\}
\\
&=\text{tr}\Big\{\textstyle\sum_{i=1}^n(\m{b}_i^\top(\m{P}_i-\m{P}_i^*)-(\m{z}_i-\m{z}_i^*)^\top)^\top(\boldsymbol{\lambda}_i-\boldsymbol{\lambda}_i^*)^\top
\\
&+\textstyle\sum_{i=1}^n(\boldsymbol{\lambda}_i^\top-{\boldsymbol{\lambda}_i^*}^\top+(\Upsilon_i)_i^\text{R}-(\Upsilon_i^*)_i^\text{R})^\top(\m{z}_i-\m{z}_i^*)^\top\Big\}
\\
&=\text{tr}\Big\{\textstyle\sum_{i=1}^n(\boldsymbol{\lambda}_i-\boldsymbol{\lambda}_i^*)\m{b}_i^\top(\m{P}_i-\m{P}_i^*)\\
&+\textstyle\sum_{i=1}^n((\Upsilon_i)_i^\text{R}-(\Upsilon_i^*)_i^\text{R})^\top(\m{z}_i-\m{z}_i^*)^\top\Big\},
\end{align*}
in which, the expression under the first sum is cancelled out by the second term in \eqref{eq:relation1}, and the trace of the last sum is compensated by the trace of the last sum in \eqref{eq:relation10}.

By combining all the preceding relations, we obtain \eqref{eq:dotV_simpler}. 
\end{proof}
\section*{Acknowledgments}
The work of Q. V. Tran and H.-S. Ahn was supported by the National Research Foundation of Korea (NRF) grant funded by the Korea government (MSIT) (2019R1A4A1029003).

B. D. O. Anderson is supported by the Australian Research Council
(ARC) under grant DP-160104500 and DP-190100887, also by Data61-
CSIRO.


\nocite{}
\bibliographystyle{IEEEtran}
\bibliography{quoc2018,quoc2019}

\end{document}